\newtheorem{thm}{Theorem}[section]
\newtheorem{lem}[thm]{Lemma}
\newtheorem{cor}[thm]{Corollary}
\newtheorem{prop}[thm]{Proposition}
\theoremstyle{definition}
\newtheorem{defn}[thm]{Definition}
\newtheorem{rem}[thm]{Remark}          
\newtheorem*{ack}{Acknowledgments}      % \renewcommand{\theack}{} 
\newtheorem{defn-thm}[thm]{Definition--Theorem}  %!!!!!!!!!!!!!!!!!!!!!!!!
\newtheorem{defn-lem}[thm]{Definition--Lemma}  %!!!!!!!!!!!!!!!!!!!!!!!!
\theoremstyle{remark}
\newtheorem{claim}[thm]{Claim}
\renewcommand{\o}[0]{{\mathcal O}} 
\renewcommand{\L}[0]{{\mathcal L}} 
\newcommand{\I}[0]{{\mathcal I}}
\newcommand{\p}[0]{{\mathbb P}}
\newcommand{\f}[0]{{\varphi}}
\newcommand{\T}[0]{{\mathbb T}}
\newcommand{\map}[0]{\dasharrow}
\newcommand{\C}{\mathbb{C}}
\def\rig#1{\smash{ \mathop{\longrightarrow}\limits^{#1}}}
\newcommand{\sL}{\mathcal{L}}
\newcommand{\sH}{\mathcal{H}}
\def\loccoh#1.#2.#3.#4.{H^{#1}_{#2}(#3,#4)}
\DeclareMathAlphabet{\mathchanc}{OT1}{pzc}%
                                {m}{it}
\newcommand{\Sec}[0]{\operatorname{{\mathbb S}ec}}
\newcommand{\Bs}[0]{\operatorname{{Bs}}}
\newcommand{\Span}[1]{{\langle#1\rangle}}
\newcommand{\Gr}[0]{{{\mathbb G}r}}
\numberwithin{equation}{section}
\begin{document}
\bibliographystyle{amsalpha}

\title[Waring decompositions for a polynomial vector]{On the number of Waring decompositions for a generic polynomial vector\\
}

\author[E. Angelini, F. Galuppi, M. Mella]{Elena Angelini, Francesco Galuppi, Massimiliano Mella}
\address[E.Angelini, F. Galuppi, M. Mella]{ Dipartimento di  Matematica e Informatica\\ Universit\`a di Ferrara\\ Via Machiavelli 35\\ 44121 Ferrara, Italia}
\email{elena.angelini@unife.it, francesco.galuppi@unife.it, mll@unife.it}

\author[G. Ottaviani]{Giorgio Ottaviani}
\address[G. Ottaviani]{Dipartimento di Matematica e Informatica 'Ulisse Dini'\\ 
Universit\`{a} di Firenze \\ Viale Morgagni 67/A \\  Firenze , Italia}
\email{ottavian@math.unifi.it}
%\urladdr{\blue{\tt{http://}}}

\begin{abstract} We prove that a general polynomial vector $(f_1, f_2, f_3)$ in three homogeneous variables of degrees $(3,3,4)$
has a unique Waring decomposition of rank 7. This is the first new case we are aware, and likely the last one, after five examples known since 19th century and
the binary case. We prove that there are no identifiable cases among pairs $(f_1, f_2)$ in three homogeneous variables
of degree $(a, a+1)$, unless $a=2$, and we give a lower bound on the number of decompositions. The new example was discovered with Numerical Algebraic Geometry, while its proof needs Nonabelian Apolarity.
\end{abstract}
\maketitle

\section{Introduction}\label{sec:intr}

Let $f_1$, $f_2$ be two general quadratic forms in $n+1$ variables over $\C$.
A well known theorem, which goes back to Jacobi and Weierstrass, says that $f_1$, $f_2$ can be simultaneously diagonalized. More precisely there exist linear forms $l_0,\ldots, l_n$ and scalars  $\lambda_0,\ldots, \lambda_n$ such that
\begin{equation}\label{eq:2quadrics}
\left\{\begin{array}{rcl}f_1&=&\sum_{i=0}^nl_i^2\\
\hspace{0.2cm} \\
f_2&=&\sum_{i=0}^n\lambda_il_i^2\end{array}\right.
\end{equation} 
An important feature is that the forms $l_i$ are unique (up to order) and their equivalence class, up to multiple scalars, depend only on the pencil $\left< f_1, f_2\right>$, hence also $\lambda_i$
are uniquely determined after $f_1$, $f_2$ have been chosen in this order.
The canonical form (\ref{eq:2quadrics}) allows to write easily the 
basic invariants of the pencil, like the discriminant which takes the form $\prod_{i<j}(\lambda_i-\lambda_j)^2$. We call  (\ref{eq:2quadrics}) a (simultaneous) Waring decomposition of
the pair $(f_1, f_2)$.  The pencil $(f_1,f_2)$ has a unique Waring decomposition with $n+1$ summands if and only if its discriminant does not vanish. In the tensor terminology, $(f_1, f_2)$ is {\it generically identifiable}.

We generalize now the decomposition (\ref{eq:2quadrics}) to $r$ general forms, even allowing different degrees. For symmetry reasons, it is convenient not to distinguish $f_1$ from the other $f_j$'s, so we will allow scalars $\lambda^j_i$ to the decomposition of each $f_j$, including $f_1$. To be precise, let $ f=(f_1, \ldots, f_r) $ be a vector of general homogeneous forms of degree $ a_1, \ldots, a_r $ in $n+1$ variables over the complex field $ \C $, i.e. $ f_i \in {\mathrm Sym}^{a_i} \C^{n+1} $ for all $ i \in \{1, \ldots, r\} $. Let assume that $ 2 \leq a_{1} \leq \ldots \leq a_{r} $. 

\begin{defn}\label{simdec}
A \emph{Waring decomposition} of  $ f=(f_1, \ldots, f_r) $ is given by
 linear forms $ \ell_1, \ldots, \ell_k \in \mathbb{P}(\C^\vee) $ and scalars $ (\lambda_{1}^{j}, \ldots, \lambda_k^{j}) \in \mathbb{C}^{k}-\{\underline{0}\} $ with $ j \in \{1, \ldots, r\} $ such that
\begin{equation}\label{eq:dec}
f_{j} = \lambda_{1}^{j}\ell_{1}^{a_{j}}+ \ldots +  \lambda_{k}^{j}\ell_{k}^{a_{j}}
\end{equation}
for all $ j \in \{1, \ldots, r\} $
or in vector notation
\begin{equation}\label{eq:simwaring}
f=\sum_{i=1}^k\left(\lambda_{i}^{1}\ell_{i}^{a_{1}},\ldots, \lambda_{i}^{r}\ell_{i}^{a_{r}}\right)
\end{equation}

The geometric argument in \S \ref{subsec:projbundle} shows that every $f$ has a Waring decomposition. We consider two  Waring decompositions of $f$ as in (\ref{eq:simwaring}) being equal if they differ just by the order of the $k$ summands. The {\it  rank} of
$f$ is the minimum number $k$ of summands appearing in (\ref{eq:simwaring}), this definition coincides with the classical one
in the case $r=1$ (the vector $f$ given by a single polynomial).

\end{defn}
Due to the presence of the scalars $\lambda^j_i$, each form $\ell_{i}$ depends essentially only on $n$ conditions. So the decomposition (\ref{eq:dec}) may be thought as a nonlinear system with $\sum_{i=1}^r{{a_i+n}\choose n}$ data (given by $f_{j}$) and $k(r+n)$ unknowns (given by $kr$ scalars $\lambda^j_i$ and $k$ forms $\ell_{i}$). This is a very classical subject, see for example \cite{Re, Lon, Ro, Sco, Te2}, although in most of classical papers the degrees $a_i$ were assumed equal, with the notable exception of \cite{Ro}.
\begin{defn}\label{d:perfectcases}
Let $ a_1,\ldots, a_r, n$ be as above. 

\noindent The space $ {\mathrm Sym}^{a_1} \C^{n+1}\oplus\ldots\oplus {\mathrm Sym}^{a_r} \C^{n+1}$
is called \emph{perfect} if there exists $k$ such that
\begin{equation}\label{eq:perfect}
\sum_{i=1}^r{{a_i+n}\choose n} = k(r+n) 
\end{equation}
i.e. when (\ref{eq:dec}) corresponds to a square polynomial system.  
\end{defn}

The arithmetic condition (\ref{eq:perfect}) means that $\sum_{i=1}^r{{a_i+n}\choose n}$ is divisible by $(r+n)$, in other terms 
the number of summands $k$ in the system (\ref{eq:dec}) is uniquely determined.

The case with two quadratic forms described in (\ref{eq:2quadrics}) corresponds to $r=2$, $a_1=a_2=2$, $k=n+1$ and it is perfect. The perfect cases are important because, by the above dimensional count, we expect finitely many  Waring decompositions for the generic polynomial vector
in a perfect space $ {\mathrm Sym}^{a_1} \C^{n+1}\oplus\ldots\oplus {\mathrm Sym}^{a_r} \C^{n+1}$.

It may happen that general elements in perfect spaces have no decompositions with the expected
number $k$ of summands, the first example, beside the one of plane
conics, was found by Clebsch in the XIXth century
and regards ternary quartics, where $r=1$, $a_1=4$ and $n=2$. Equation (\ref{eq:perfect}) gives $k=5$ but 
in this case the system (\ref{eq:dec}) has no solutions and indeed $6$ summands are needed to find a Waring decomposition of the general ternary quartic.
It is well known that all the perfect cases with $r=1$ when  the system (\ref{eq:dec}) has no solutions have been determined by Alexander and Hirschowitz,
while more cases for $r\ge 2$ have been found in \cite{CaCh}, where a collection of classical and modern interesting examples is listed.

Still, perfectness is a necessary condition to have finitely many  Waring decompositions.
So two natural questions, of increasing difficulty, arise.
\vskip 0.4cm
{\bf Question 1} Are there other perfect cases for $a_1,\ldots, a_r, n$, beyond (\ref{eq:2quadrics}), where a unique  Waring decomposition
 (\ref{eq:simwaring}) exists for generic $f$, namely where we have generic identifiability ?
\vskip 0.4cm
{\bf Question 2} Compute the number of  Waring decompositions (up to order of summands) for a generic $f$ in any perfect case.
\vskip 0.4cm 
The above two questions are probably quite difficult, but we feel it is worth to state them as guiding problems.
These two questions are open even in the case $r=1$ of a single polynomial. In case $r=1$, Question 1 has a conjectural answer due to the third author,
who proved many cases of this Conjecture in \cite{Me, Me1}. The birational technique used in these papers has been generalized to our setting in \S \ref{sec:ternary} of this paper.
Always in case $r=1$, some number of decompositions for small $a_1$ and $n$ have been computed (with high probability) in \cite{HOOS} by homotopy continuation techniques, with the numerical software Bertini \cite{Be}.

In this paper we contribute to the above two questions. Before stating our conclusions, we still need to expose other known results on this topic.

In the case $n=1$ (binary forms) there is a result by Ciliberto and Russo \cite{CR}
which completely answers our Question 1.

\begin{thm}[Ciliberto-Russo]\label{thm:binforms0}
Let $n=1$. In all the perfect cases there is a unique  Waring decomposition
for generic $f\in {\mathrm Sym}^{a_1}\C^2\oplus\ldots\oplus {\mathrm Sym}^{a_r}\C^2$ if and only if
$ a_{1}+1 \geq \frac{\sum_{i=1}^r(a_i+1)}{r+1} $. (Note the fraction $\frac{\sum_{i=1}^r(a_i+1)}{r+1}$ equals the number $k$ of summands).
\end{thm}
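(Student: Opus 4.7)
The plan is to translate the problem into linear algebra via the apolarity action of $\C[\partial_0,\partial_1]$ on $\C[x_0,x_1]$. For each $f_j\in\sym^{a_j}\C^2$, write $(f_j^\perp)_k$ for the degree-$k$ piece of the apolar ideal of $f_j$. By the classical apolarity lemma for binary forms, a Waring decomposition of $f$ with $k$ pairwise distinct supports $\ell_1,\ldots,\ell_k$ corresponds to a squarefree element of
\[
V_f\;:=\;\bigcap_{j=1}^r (f_j^\perp)_k \;\subseteq\; \C[\partial]_k,
\]
together with scalars $\lambda_i^j$ realising $f_j=\sum_{i=1}^k\lambda_i^j\ell_i^{a_j}$. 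For generic $f_j$ the catalecticant contraction $\C[\partial]_k\to\C[x]_{a_j-k}$ has maximal rank, so $(f_j^\perp)_k$ has codimension $\max(0,a_j-k+1)$ in $\C[\partial]_k$. Summing and invoking the perfect relation $\sum_j(a_j+1)=k(r+1)$ yields $\sum_j(a_j-k+1)=k$, matching exactly the codimension needed for $V_f$ to be $1$-dimensional in $\C[\partial]_k\cong\C^{k+1}$. The identity $\sum_j(a_j-k+1)=\sum_j\max(0,a_j-k+1)$ holds precisely when $a_1\ge k-1$, i.e.\ when $a_1+1\ge k$.

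For the sufficiency direction, assume $a_1+1\ge k$. I would establish transversality by specialising to $f_j^0:=\sum_{i=1}^k(\ell_i^0)^{a_j}$ for $k$ general lines $\ell_i^0$ and verifying directly that $V_{f^0}$ is $1$-dimensional and generated by the dual of the squarefree form $\prod_i\ell_i^0$. Upper semi-continuity of fibre dimension then forces $\dim V_f\le 1$ for generic $f$, and propagates squarefreeness to a Zariski open set. Showing simultaneously that the Waring map $W:(\ell_i,\lambda_i^j)\mapsto f$ has surjective differential at the chosen specialisation proves its dominance, which gives $V_f\neq 0$ generically, hence $\dim V_f=1$. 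The supports $\{\ell_i\}$ are then recovered uniquely, up to order, as the dual roots of the generator of $V_f$. Because $a_j+1\ge k$ for every $j$, the powers $\ell_1^{a_j},\ldots,\ell_k^{a_j}$ are linearly independent in $\sym^{a_j}\C^2$ by a Vandermonde argument, so each $\lambda_i^j$ is uniquely determined by $f_j$ and the $\ell_i$'s, yielding generic identifiability.

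For necessity, assume $a_1+1<k$, so $a_1\le k-2$. Then $\dim\sym^{a_1}\C^2=a_1+1\le k-1<k$, and the $k$ powers $\ell_1^{a_1},\ldots,\ell_k^{a_1}$ are automatically linearly dependent for any choice of distinct supports; the equation $\sum_i\lambda_i^1\ell_i^{a_1}=f_1$ then has a $(k-a_1-1)$-dimensional affine solution space. Consequently, whenever a Waring decomposition of $f$ with support $\{\ell_i\}$ exists, infinitely many non-equivalent ones do, and uniqueness fails. In parallel, the codimension count from the first paragraph gives $\sum_j\max(0,a_j-k+1)>k$ in this regime, so a transversality argument predicts that generic $f$ admits no Waring decomposition with exactly $k$ summands at all. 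Either pathology rules out generic identifiability.

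The main obstacle is the transversality/dominance claim in the sufficiency step. While the codimension count produces the correct expectation, a priori the subspaces $(f_j^\perp)_k$ trace only a catalecticant subvariety of the Grassmannian as $f_j$ varies, so a bare dimension count does not by itself guarantee generic behaviour. The specialisation to $f_j^0=\sum_i(\ell_i^0)^{a_j}$ circumvents this by providing a concrete test point, but one must verify by hand that $V_{f^0}$ is exactly $1$-dimensional, that its generator is genuinely squarefree, and that the differential of $W$ is surjective there. These verifications—dual in spirit to Terracini's lemma applied to the relevant rational normal curve—constitute the delicate computation on which the whole argument pivots.
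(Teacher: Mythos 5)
Your strategy coincides with the paper's: the intersection $V_f=\bigcap_j(f_j^\perp)_k$ is exactly the kernel of the contraction $A_f\colon{\mathrm Sym}^k\C^2\to\oplus_j{\mathrm Sym}^{a_j-k}\C^2$ appearing in the first row of Theorem \ref{thm:nonabelian_applied2} (on $\p^1$, nonabelian apolarity with $F=\o_{\p^1}(k)$ is just classical apolarity), and recovering the $\ell_i$ as the roots of the one-dimensional kernel and the scalars by a Vandermonde argument is the same mechanism. Your codimension bookkeeping, the observation that $\sum_j\max(0,a_j-k+1)=k$ precisely when $a_1+1\ge k$, and your sketch of the necessity direction (which the paper does not reprove, deferring to Ciliberto--Russo) are all sound.

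The genuine gap sits in the one verification you yourself single out as the pivot: the test point $f_j^0=\sum_i(\ell_i^0)^{a_j}$, with all coefficients equal to $1$, does \emph{not} in general satisfy $\dim V_{f^0}=1$. Using $g(\partial)\cdot\ell^{a}=c\,g(\ell)\,\ell^{a-k}$ for $g\in\C[\partial]_k$, the map $A_{f^0}$ factors as the evaluation $g\mapsto(g(\ell_1^0),\dots,g(\ell_k^0))\in\C^k$ (surjective, with kernel spanned by the product of the $k$ linear forms dual to the $\ell_i^0$) followed by the map $\C^k\to\oplus_j{\mathrm Sym}^{a_j-k}\C^2$ sending $e_i\mapsto\bigl((\ell_i^0)^{a_j-k}\bigr)_j$. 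This second map fails to be injective whenever two degrees coincide: if $a_j=a_{j'}$ the corresponding components impose identical conditions, the $k$ expected conditions collapse to fewer, and $\dim V_{f^0}>1$. The simplest instance is already the Weierstrass case $r=2$, $a_1=a_2=2$, $k=2$, where your specialisation yields $f_1^0=f_2^0$, a manifestly non-generic pair with infinitely many decompositions, so semicontinuity from this point proves nothing. The repair is easy but necessary: take $f_j^0=\sum_i\lambda_i^j(\ell_i^0)^{a_j}$ with generic scalars, so the second map becomes $e_i\mapsto\bigl(\lambda_i^j(\ell_i^0)^{a_j-k}\bigr)_j$; specialising the $\lambda_i^j$ to a $0/1$ pattern that partitions $\{1,\dots,k\}$ into blocks of sizes $a_j-k+1$ makes its matrix block-diagonal with invertible Vandermonde blocks, whence $\dim V_{f^0}=1$ with squarefree generator for generic data, and the remainder of your argument (semicontinuity, dominance of the Waring map at the same point, Vandermonde for the scalars) then goes through.
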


We will provide alternative proofs to Theorem \ref{thm:binforms0} by
using Apolarity, see Theorem \ref{thm:nonabelian_applied2}.

As widely expected, for $n>1$ generic identifiability is
quite a rare phenomenon. They have been extensively
investigated in the XIX$^{\rm th}$ century and at the beginning of the
XX$^{\rm th}$ century and the following are the only discovered cases that we are aware:
\begin{equation}\label{eq:classiclist}
 % \begin{itemize}
\left\{\begin{array}{ll}
(i)   &({\mathrm Sym}^2\C^n)^{\oplus 2}, \textrm{rank\ } n,\textrm{Weierstrass \cite{We}, as in (\ref{eq:2quadrics})},\\
(ii) & {\mathrm Sym}^5\C^3, \textrm{rank }7, \textrm{Hilbert \cite{Hi}, see also
  \cite{Ri} and \cite{Pa}},\\
(iii)&{\mathrm Sym}^3\C^4, \textrm{rank } 5,\textrm{Sylvester Pentahedral Theorem \cite{Sy}},\\
(iv)& ({\mathrm Sym}^2\C^3)^{\oplus 4}, \textrm{rank\ } 4,\\
(v)&{\mathrm Sym}^2\C^3\oplus {\mathrm Sym}^3\C^3,\textrm{rank\ }4,\textrm{Roberts \cite{Ro}.}
\end{array}\right.
 % \end{itemize}
\end{equation}

 The interest in Waring decompositions  was revived by
    Mukai's work on 3-folds, \cite{Mu}\cite{Mu1}. Since then many authors
    devoted their energies to understand, interpret and expand the theory.
Cases $(ii)$ and $(iii)$ in (\ref{eq:classiclist}) were explained by Ranestad and Schreyer in
\cite{RS} by using syzygies, see also \cite{MM} for an approach via
projective geometry and \cite{OO} for a vector bundle approach (called in this paper ``Nonabelian Apolarity'', see \S\ref{sec:Nonabelian}). Case $(v)$ was reviewed in \cite{OS} in
the setting of Lueroth quartics. $(iv)$ is a classical and ``easy'' result, there is a unique Waring decomposition of a
general 4-tuple of ternary quadrics. There is
a very nice geometric interpretation for this latter case. Four points in $\p^5$ define a
$\p^3$ that cuts the Veronese surface in 4 points giving the required
unique decomposition. See Remark \ref{rem:d^n} for a generalization to arbitrary $(d,n)$.  

Our main contribution with respect to unique decompositions is the following new case.
\begin{thm}
  \label{thm:main334} A general $f\in {\mathrm Sym}^3\C^3\oplus {\mathrm Sym}^3\C^3\oplus
  {\mathrm Sym}^4\C^3$ has a unique Waring decomposition of rank 7, namely it is identifiable.
\end{thm}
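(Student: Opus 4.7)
The triple $(a_1,a_2,a_3)=(3,3,4)$ with $n=2$ is perfect with $k=7$, since
\[
\binom{5}{2}+\binom{5}{2}+\binom{6}{2}=10+10+15=35=7\cdot(3+2).
\]
So one expects finitely many decompositions generically, and the task is to show there is exactly one. The plan splits into \emph{existence} and \emph{uniqueness} of the rank-$7$ decomposition.

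For existence, consider the Waring map $\Phi$ from the parameter space of $7$-tuples $\bigl(([\ell_i]),(\lambda_i^1,\lambda_i^2,\lambda_i^3)\bigr)_{i=1}^7 \in (\p^2\times\C^3)^7/\mathfrak{S}_7$ to $\mathrm{Sym}^3\C^3\oplus \mathrm{Sym}^3\C^3\oplus \mathrm{Sym}^4\C^3$, sending such a tuple to $\sum_{i=1}^7(\lambda_i^1\ell_i^3,\lambda_i^2\ell_i^3,\lambda_i^3\ell_i^4)$. Both source and target have dimension $35$ by perfectness. I would produce an explicit rational $7$-tuple, and verify by a Terracini-style computation (the image of $d\Phi$ is spanned by the tangent spaces at the $\ell_i^{a_j}$ to the respective Veronese components, weighted by the $\lambda_i^j$) that $d\Phi$ there has full rank. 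Dominance of $\Phi$, and hence existence for a generic $f$, then follows by semicontinuity.

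For uniqueness, I would apply Nonabelian Apolarity as developed in Section~\ref{sec:Nonabelian}. The strategy is to choose a vector bundle $E$ on $\p^2$ tailored to the degrees $(3,3,4)$, and to build from $f=(f_1,f_2,f_3)$ a contraction matrix $M_f$ using global sections of suitable twists of $E$. By apolarity, for any rank-$7$ decomposition with supporting points $\Gamma=\{[\ell_1],\ldots,[\ell_7]\}\subset\p^2$, the set $\Gamma$ must lie in the degeneracy locus of $M_f$. It then suffices to check, at the explicit $f_0$ from the existence step, that the degeneracy locus of $M_{f_0}$ consists of exactly $7$ reduced points. Upper semicontinuity of rank transports this conclusion to a Zariski-open neighborhood of $f_0$, uniquely determining the support $\Gamma$ of any rank-$7$ decomposition of a generic $f$. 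Once $\Gamma$ is fixed, the scalars $\lambda_i^j$ are then recovered by a (uniquely solvable) linear system.

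The main obstacle is the choice of $E$. Unlike the single-form identifiable cases of \cite{OO}, each of which comes with a canonical bundle, the mixed-degree setting here requires $E$ to align contractions by a cubic and by a quartic into a common cohomology group; heuristically one should look among twists of the tautological quotient $Q$ on $\p^2$ and its exterior powers, possibly in different degrees for the two contributions and then assembled block-wise into $M_f$. Once $E$ is pinned down, the final degeneracy verification at $f_0$ is a finite cohomological computation that can be performed symbolically, in keeping with the numerical algebraic geometry that led to the discovery of the example.
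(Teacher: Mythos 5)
Your overall architecture is the same as the paper's: the paper also proves this case via Nonabelian Apolarity (Theorem \ref{thm:nonabelian_applied2}), recovering the seven points as the base locus of the kernel of a contraction map, checking the required rank and base-locus conditions at a specific random $f$, transporting to the generic $f$ by semicontinuity, and then solving a linear system for the scalars. But your write-up has a genuine gap exactly where you flag ``the main obstacle'': you never actually produce the bundle $E$, and without it there is no contraction matrix $M_f$ to analyze and no degeneracy computation to run. This is not a routine detail to be deferred --- the whole content of the proof is the verification that a specific $E$ satisfies the numerology of Proposition \ref{prop:nonabelian}. The paper's choice is $F=Q_{\p^2}(2)$ on $Y=\p^2$ (rank $2$, with $c_2(Q_{\p^2}(2))=7$), pulled back along the bundle projection $\pi\colon X=\p(\o_{\p^2}(3)^{\oplus 2}\oplus\o_{\p^2}(4))\to\p^2$ to $E=\pi^*F$. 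Then $A_f\colon H^0(Q(2))\to \left(H^0(Q)^{\oplus 2}\oplus H^0(Q(1))\right)^*$ is a map from a $15$-dimensional space to a $14$-dimensional one; one checks $\mathrm{rk}\,A_f=14=7\cdot\mathrm{rk}\,E$, so $\ker A_f$ is one-dimensional, and its generator is a section of $Q(2)$ vanishing on exactly $7$ points (the top Chern class), which are the supports $[\ell_i]$. Note that there is no need to ``align contractions by a cubic and by a quartic into a common cohomology group'' via exterior powers or block assembly: a single twist of $Q$ works because the target naturally splits as a direct sum over the three components of $f$.

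Two smaller points. First, your existence step is more elaborate than needed: since $X$ spans $\p^{34}$, every $f$ has some Waring decomposition, and finiteness in rank $7$ follows from non-defectivity, which in this setup is a byproduct of the apolarity computation (a positive-dimensional contact locus would force the base locus of $\ker A_f$ to be positive-dimensional). Second, the ``uniquely solvable linear system'' for the $\lambda_i^j$ is not automatic: each fiber $\pi^{-1}([\ell_i])$ is a $\p^2$, and you need the seven fibers to be linearly independent in $\p^{34}$. The paper derives this from Terracini's Lemma --- each fiber sits inside the tangent space to $X$ at the corresponding point, and non-defectivity makes the seven tangent spaces independent. You should say this explicitly rather than assert unique solvability.
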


The Theorem will be proved in the general setting of Theorem \ref{thm:nonabelian_applied2}.
Beside the new example found we think it is important to stress the
way it arised.
We adapted the methods in  \cite{HOOS} to our setting, by using the  software Bertini \cite{Be} and also the package {\it Numerical Algebraic Geometry} \cite{KL}
in Macaulay2 \cite{M2}, with the generous help by Jon Hauenstein and Anton Leykin, who assisted us in writing our first scripts.
The computational analysis of perfect cases of forms on $\C^3$ suggested that for ${\mathrm Sym}^3\C^3\oplus {\mathrm Sym}^3\C^3\oplus
  {\mathrm Sym}^4\C^3$ the
 Waring decomposition is unique. Then we proved it via
Nonabelian Apolarity with the choice of a vector bundle. 
Another novelty of this paper is a unified proof
of almost all cases with a unique Waring decomposition via Nonabelian
Apolarity with the choice of a vector bundle $E$, see Theorem \ref{thm:nonabelian_applied2}. Finally we borrowed a construction
from \cite{MM} to prove, see Theorem \ref{thm:unirat}, that whenever we have uniqueness for rank $k$
then the variety parametrizing Waring decompositions of higher rank is
unirational.

Pick $r=2$ and $n=2$, the space ${\mathrm Sym}^a\C^3\oplus{\mathrm Sym}^{a+1}\C^3$ is perfect
if and only if $a=2t$ is even. All the numerical computations we did suggested that identifiability holds only for $a=2$ (by Robert's Theorem, see (\ref{eq:classiclist}) $(v)$). Once again this
pushed us to prove the non-uniqueness for these pencils of plane curves.
Our main contribution to Question 2 regards this case and it is the following.
\begin{thm}
  \label{th:main_identifi_intro} A general $f\in{\mathrm Sym}^{a}\C^3\oplus{\mathrm Sym}^{a+1}\C^3$ is identifiable
if and only if $a=2$, corresponding to (v) in the list (\ref{eq:classiclist}). Moreover
$f$ has finitely many Waring decompositions if and only if $a=2t$ and in this
case the number of decompositions is at least
$$ \frac{(3t-2)(t-1)}2+1. $$
\end{thm}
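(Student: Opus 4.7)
The first task is the arithmetic: for $r=n=2$ and $(a_1,a_2)=(a,a+1)$, equation (\ref{eq:perfect}) reduces to
\[
\binom{a+2}{2}+\binom{a+3}{2}=(a+2)^{2}=4k,
\]
which is solvable in integers iff $a=2t$ is even, with $k=(t+1)^{2}$. For $a$ odd, the minimal decomposition rank $k_g$ must satisfy $4k_g>(a+2)^{2}$; a Terracini tangent-space computation (establishing nondefectivity of the relevant secant variety of the rank-one locus in $\mathbb{P}(\mathrm{Sym}^{a}\C^{3}\oplus\mathrm{Sym}^{a+1}\C^{3})$) then shows that the parametrization of decompositions has positive-dimensional fibers over a generic $(f_1,f_2)$, hence infinitely many decompositions. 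This is the ``finitely many iff $a$ is even'' half.

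Assume now $a=2t$. The polynomial system is square, so finiteness of the set of decompositions is obtained by applying Nonabelian Apolarity (Theorem \ref{thm:nonabelian_applied2}) to a suitable vector bundle $E$ on $\mathbb P^{2}$, which realizes decompositions as the zero scheme of a morphism between bundles of equal rank. The identifiable case $a=2$ ($t=1$, $k=4$) is Roberts' theorem, item $(v)$ of (\ref{eq:classiclist}). For $t\ge 2$ the failure of identifiability will follow from the lower bound, which is the main content of the statement.

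For the lower bound I would exploit the numerical fact that $k=(t+1)^{2}$ is exactly the length of a complete intersection of type $(t+1,t+1)$, equivalently the base locus of a pencil of curves of degree $t+1$. Starting from an initial Waring decomposition $Z\subset\mathbb P^{2}$ of $(f_1,f_2)$, one first checks (via a Hilbert-function argument combined with the apolarity constraints) that $Z$ is contained in a distinguished pencil $\Lambda_{Z}\subset H^{0}(\I_{Z}(t+1))$. Following the birational construction of \cite{MM} generalized in \S\ref{sec:ternary}, one then deforms $\Lambda_{Z}$ while preserving the apolar conditions $\I_{Z'}\cdot f_1=0$ in degree $2t$ and $\I_{Z'}\cdot f_2=0$ in degree $2t+1$. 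A residuation (Cayley--Bacharach-type) dimension-and-degree calculation for the locus of admissible pencils produces at least $\tfrac{(3t-2)(t-1)}{2}+1$ distinct unordered decompositions, the ``$+1$'' counting the starting decomposition $Z$.

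The main obstacle is twofold: first, showing that a generic decomposition $Z$ is indeed contained in such a distinguished pencil $\Lambda_Z$, and second, verifying that distinct admissible deformations of $\Lambda_Z$ produce genuinely distinct unordered $k$-subsets of $\mathbb P^{2}$ that still decompose the same $(f_1,f_2)$. Both steps rely on the birational and residuation machinery developed in \S\ref{sec:ternary}, and it is in the parameter count of the variety of admissible pencils that the sharp number $(3t-2)(t-1)/2+1$ emerges rather than a crude bound.
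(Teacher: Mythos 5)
The arithmetic reduction ($4k=(a+2)^2$, hence $a=2t$ and $k=(t+1)^2$), the identification of $a=2$ with Roberts' theorem, and the ``infinitely many decompositions for $a$ odd'' step all match the paper. The genuine gap is in the lower bound, which is the main content of the theorem. Your mechanism --- that a generic decomposition $Z$ of length $k=(t+1)^2$ lies on a distinguished pencil $\Lambda_Z$ of curves of degree $t+1$, to be deformed by a Cayley--Bacharach residuation --- fails at the first step for $t\ge 2$. Indeed $h^0(\o_{\p^2}(t+1))-(t+1)^2=\tfrac{-t^2+t+4}{2}$, which equals $1$ for $t=2$ and is negative for $t\ge 3$; and the apolarity constraint only gives $H^0(\I_Z(t+1))\supseteq\ker C_f$ where $C_f\colon{\mathrm Sym}^{t+1}V^*\to{\mathrm Sym}^{t-1}V\oplus{\mathrm Sym}^{t}V$ has target of dimension exactly $(t+1)^2$, so no two-dimensional kernel is forced. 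Hence the pencil you need generally does not exist, and even granting it, the ``dimension-and-degree calculation'' that is supposed to output exactly $\tfrac{(3t-2)(t-1)}{2}+1$ is asserted rather than performed. (Also, Nonabelian Apolarity, Theorem \ref{thm:nonabelian_applied2}, does not apply to ${\mathrm Sym}^{2t}\C^3\oplus{\mathrm Sym}^{2t+1}\C^3$ for $t\ge2$ and is not how finiteness is obtained.)

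The paper's actual route is different. Theorem \ref{th:birational_tangent_proj} shows that the number $d$ of decompositions is bounded below by the degree of the tangential projection $\f_{|X}\colon X\dasharrow\p^3$ from $\T_z\Sec_{k-1}(X)$, where $X\cong Bl_q\p^3$ is embedded by the monoids $|\I_{q^a}(a+1)|$, so that $\f_{|X}$ is given by $\sH=|\I_{q^a\cup p_1^2\cup\cdots\cup p_{k-1}^2}(a+1)|$. Lemma \ref{lem:birational_degeneration} (semicontinuity of the degree under specialization of the double points) reduces the computation to a degenerate configuration, and Lemma \ref{lem:degeneration_ok} specializes $b=\tfrac{t(t+3)}{2}$ of the $k-1$ points onto a plane $H$, computes $\deg\f_{\sH|H}=(2t+1)^2-4b-c=\tfrac{(3t-2)(t-1)}{2}$ as a self-intersection on the blow-up of $H$, and concludes with the strict inequality $\deg\f_\sH>\deg\f_{\sH|H}$, giving the ``$+1$''. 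Non-defectivity for $a=2t$ (hence finiteness) is a byproduct of the same lemma (Remark \ref{rem:non_defective}), since it shows $\dim\sH=3$. None of this tangential-projection and degeneration machinery appears in your argument, so the bound is not established.
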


We know by equation~(\ref{eq:classiclist})(v) that the bound is sharp for $t=1$ and we verified with high probability, using \cite{Be}, that it 
is attained also for $t=2$. On the other hand we do not expect it to be sharp in general. 
Theorem \ref{th:main_identifi_intro} is proved in section \S
\ref{sec:ternary}. The main idea, borrowed from \cite{Me}, is to
bound the number of decompositions with the degree of a tangential
projection, see Theorem \ref{th:birational_tangent_proj}. To bound the
latter we use a degeneration argument, see Lemma
\ref{lem:birational_degeneration}, that reduces the computation needed
to an intersection calculation on the plane.

\begin{ack} We thank all the participants of the seminar about Numerical Algebraic Geometry held among Bologna, Ferrara, Firenze and Siena in 2014-15, for fruitful and stimulating discussions. We benefit in particular speaking with A. Bernardi, C. Bocci, A. Calabri, L. Chiantini. We thank J. Hauenstein and A. Leykin for their help with our first numerical computations. All the authors are members of GNSAGA-INDAM.
\end{ack}

\section{The Secant construction}\label{sec:secant}
\subsection{Secant Varieties}
Let us recall, next, the main definitions and results concerning secant varieties.
Let $\Gr_k=\Gr(k,N)$ be the Grassmannian of $k$-linear spaces in $\p^N$.
Let $X\subset\p^{N}$ be an irreducible variety
$$\Gamma_{k+1}(X)\subset X\times\cdots\times X\times\Gr_k,$$
 the closure of the graph of
$$\alpha:(X\times\cdots\times X)\setminus\Delta\to \Gr_k,$$
taking $(x_0,\ldots,x_{k})$ to the  $[\langle
  x_0,\ldots,x_{k}\rangle]$, for $(k+1)$-tuple of distinct points.
Observe that $\Gamma_{k+1}(X)$ is irreducible of dimension $(k+1)n$. 
Let $\pi_2:\Gamma_{k+1}(X)\to\Gr_k$ be
the natural projection.
Denote by 
$$S_{k+1}(X):=\pi_2(\Gamma_{k+1}(X))\subset\Gr_k.$$
Again $S_{k+1}(X)$ is irreducible of dimension $(k+1)n$.
Finally let 
$$I_{k+1}=\{(x,[\Lambda])| x\in \Lambda\}\subset\p^{N}\times\Gr_k,$$
with natural projections $\pi_i$ onto the factors.
Observe that $\pi_2:I_{k+1}\to\Gr_k$ is a $\p^{k}$-bundle on $\Gr_k$.

\begin{defn}\label{def:secant} Let $X\subset\p^{N}$ be an irreducible variety. The {\it abstract $k$-Secant variety} is
$$\sec_k(X):=\pi_2^{-1}(S_k(X))\subset I_k.$$ While the {\it $k$-Secant variety} is
$$\Sec_k(X):=\pi_1(\sec_k(X))\subset\p^N.$$
It is immediate that $\sec_k(X)$ is a $(kn+k-1)$-dimensional variety with a 
$\p^{k-1}$-bundle structure on $S_k(X)$. One says that $X$ is
$k$-\emph{defective} if $$\dim\Sec_k(X)<\min\{\dim\sec_k(X),N\}$$ and calls $ k $-\emph{defect} the number $$\delta_{k}=\min\{\dim\sec_k(X),N\}-\dim\Sec_k(X).$$
\end{defn}

\begin{rem} Let us stress that in our definition $\Sec_1(X)=X$. A simple but useful feature of the above definition is the following. Let $\Lambda_1$ and $\Lambda_2$ be two distinct
$k$-secant $(k-1)$-linear space to $X\subset\p^{N}$. Let $\lambda_1$ and  $\lambda_2$ be
the corresponding projective $(k-1)$-spaces in $\sec_k(X)$. 
Then we have $\lambda_1\cap \lambda_2=\emptyset$.
\label{re:vuoto} 
\end{rem}
   
Here is the main result we use about secant varieties.
\begin{thm}[Terracini Lemma \cite{Te}\cite{ChCi}] \label{th:terracini}
Let $X\subset\p^{N}$ be an irreducible, projective variety. 
If $p_1,\ldots, p_{k}\in X$ are general points and $z\in\langle
p_1,\ldots, p_{k}\rangle$
 is a general point, then the embedded tangent space at $z$ is
$$\T_z\Sec_k(X) = \langle \T_{p_1}X,\ldots, \T_{p_{k}}X\rangle$$
If $X$ is k-defective, then the general hyperplane $H$ containing
$\T_{z}\Sec(X)$ is tangent to $X$ along a variety
$\Sigma(p_1,\ldots, p_{k})$ of pure, positive dimension, containing
$p_1,\ldots, p_{k}$.
\end{thm}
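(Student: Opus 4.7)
My plan is to handle the two assertions in turn. For the first, I would use the natural parametrization of the affine cone over $\Sec_k(X)$. Picking an affine lift $\widehat{X}\subset\mathbb{A}^{N+1}$ of $X$, consider the summation map
$$\sigma\colon\mathbb{A}^k\times \widehat X^k\longrightarrow \mathbb{A}^{N+1},\qquad (\lambda_1,\dots,\lambda_k,q_1,\dots,q_k)\longmapsto \sum_{i=1}^k \lambda_iq_i,$$
whose image is the affine cone $\widehat{\Sec_k(X)}$. At a general point $(\lambda_i,p_i)$ mapping to a lift $\hat z$ of $z$, the image of $d\sigma$ consists of the vectors
$$\sum_{i=1}^k\dot\lambda_i\, p_i+\sum_{i=1}^k\lambda_i\,\dot q_i,\qquad \dot q_i\in\widehat{\T}_{p_i}X.$$
Because $\widehat X$ is a cone, the line $\mathbb{C}\cdot p_i$ is already contained in $\widehat{\T}_{p_i}X$, so the image coincides with $\widehat{\T}_{p_1}X+\dots+\widehat{\T}_{p_k}X$. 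Projectivizing and using that $z$ is a smooth point of $\Sec_k(X)$ for generic choices, I obtain the asserted equality $\T_z\Sec_k(X)=\langle \T_{p_1}X,\dots,\T_{p_k}X\rangle$.

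For the second assertion, suppose $\delta_k>0$. Then $\pi_1\colon\sec_k(X)\to\Sec_k(X)$ has positive-dimensional general fiber, so through the general $z\in\Sec_k(X)$ there is a one-parameter family of abstract $k$-secant $(k-1)$-planes, pairwise disjoint in $\sec_k(X)$ by Remark \ref{re:vuoto}. Lifting this family gives a genuinely moving one-parameter family $(p_1(t),\dots,p_k(t))\in X^k$ and scalars $\lambda_i(t)$ with $\sum_i \lambda_i(t)p_i(t)\equiv\hat z$. Let $H$ be a generic hyperplane containing $\T_z\Sec_k(X)$. Applying the first part at each $t$,
$$\T_z\Sec_k(X)=\langle \T_{p_1(t)}X,\dots,\T_{p_k(t)}X\rangle$$
is constant in $t$, hence $H\supset \T_{p_i(t)}X$ for every $t$ and every $i$. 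Consequently the tangency locus $\Sigma(p_1,\dots,p_k):=\{p\in X\mid H\supset\T_pX\}$ contains the curve $\{p_i(t)\}$ through each $p_i$, so it is positive-dimensional through each of these points, and passing to irreducible components gives the purity asserted in the statement.

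The main technical step is the smoothness justification in the first part: one must verify that for generic $(\lambda_i,p_i)$ the image of $d\sigma$ coincides with the Zariski tangent space of $\widehat{\Sec_k(X)}$ at $\hat z$, which requires $\hat z$ to lie in the smooth locus and $\sigma$ to be submersive there. A dimension count using the dominance of $\sigma$ onto its image, combined with generic smoothness, handles this. In the defective case, the essential input is Remark \ref{re:vuoto}: it prevents the deforming $(k-1)$-planes from sharing points in $\sec_k(X)$, thereby forcing the tuples $(p_1(t),\dots,p_k(t))$ to genuinely move on $X$ rather than being absorbed into changes of the scalars $\lambda_i(t)$ alone.
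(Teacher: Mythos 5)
The paper does not actually prove Theorem~\ref{th:terracini}: it is quoted from \cite{Te} and \cite{ChCi} and used as a black box, so there is no internal proof to compare against. Your argument for the first assertion is the standard one and is correct: parametrizing the affine cone over $\Sec_k(X)$ by the summation map, computing the image of the differential as $\widehat{\T}_{p_1}X+\dots+\widehat{\T}_{p_k}X$ (using that each cone $\widehat{X}$ contains the line $\C\cdot p_i$ in its tangent space), and invoking generic smoothness in characteristic zero to identify this image with the tangent space of the cone over $\Sec_k(X)$ at a general point.

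The second assertion is not established by your argument. From the positive-dimensionality of the general fiber of $\pi_1\colon\sec_k(X)\to\p^N$ you correctly get a nonconstant family of $k$-secant $(k-1)$-planes $\Lambda(t)=\langle p_1(t),\dots,p_k(t)\rangle$ through $z$, hence that the $k$-tuple $(p_1(t),\dots,p_k(t))$ moves; but the conclusion requires the contact locus of $H$ to be positive dimensional \emph{through each} $p_i$, and for that every individual coordinate $p_i(t)$ must move. Remark~\ref{re:vuoto} does not give this: it only says that distinct planes determine disjoint $(k-1)$-planes inside the abstract secant variety, which is automatic from the definition of $I_k$ and is perfectly compatible with, say, $p_1(t)\equiv p_1$ remaining fixed while only $p_2(t),\dots,p_k(t)$ vary (the scalars $\lambda_i(t)$ are irrelevant here, since the plane is determined by the points alone). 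Ruling out a stationary entry point is the real content of this half of the lemma; one standard route is to observe that the locus swept out by $p_1$ is the fiber over $z$ of the incidence $\{(x,z)\,:\,z\in J(x,\Sec_{k-1}(X))\}\to\Sec_k(X)$, where $J$ denotes the cone with vertex $x$, giving the bound $\dim X+\dim\Sec_{k-1}(X)+1-\dim\Sec_k(X)$ on its dimension; this is positive when $\Sec_{k-1}(X)$ has the expected dimension, and an induction on $k$ is needed otherwise (this is essentially the argument of \cite{ChCi}). Finally, the purity of $\Sigma(p_1,\dots,p_k)$ does not follow from ``passing to irreducible components''; it needs a separate argument about contact loci of general tangent hyperplanes. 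Since the paper only ever uses the first assertion, the gap is harmless for the applications here, but as a proof of the quoted statement it is incomplete.
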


\subsection{Secants to a projective bundle}\label{subsec:projbundle}
We show a geometric interpretation of the decomposition  (\ref{eq:dec}) by considering the $k$-secant variety to the projective bundle (see \cite[II, \S 7]{Har})
$$ X=\mathbb{P}(\mathcal{O}_{\p^n}(a_{1}) \oplus \ldots \oplus \mathcal{O}_{\p^n}(a_{r}) ) \subset \mathbb{P}\left(H^0\left(\oplus_i \mathcal{O}_{\p^n}(a_{i})\right)\right) = \mathbb{P}^{N-1}, $$
where $N=\sum_{i=1}^r{{a_i+n}\choose n}$. We denote by $\pi\colon X\to\p^n$ the bundle projection. Note that $\dim X=(r+n-1)$ and the immersion in $\mathbb{P}^{N-1}$ corresponds to the canonical invertible sheaf $\o_X(1)$
constructed on $X$ (\cite[II, \S 7]{Har}).

Indeed $X$ is parametrized by
$\left(\lambda^{(1)}\ell^{a_{1}}, \ldots, \lambda^{(r)}\ell^{a_{r}} \right)\in \oplus_{i=1}^rH^0\left(\mathcal{O}_{\p^n}(a_{i})\right)$, where $\ell\in\C^{n+1}$ and $\lambda^{(i)}$ are scalars. $X$ coincides with polynomial vectors of rank $1$, as defined in the Introduction.
It follows that the $k$-secant variety to $X$ is parametrized by
$\displaystyle{\sum_{i=1}^{k}}\left(\lambda_{i}^{1}\ell_{i}^{a_{1}}, \ldots, \lambda_{i}^{r}\ell_{i}^{a_{r}} \right), $ where $\lambda_i^j$ are scalars and $\ell_i\in\C^{n+1}$. In the case $a_i=i$ for $i=1,\ldots, d$, this construction appears already in
\cite{CQU}. Since $X$ is not contained in a hyperplane, it follows that  any
polynomial vector has a Waring decomposition as in (\ref{eq:simwaring}).

 Thus, the number of  decompositions by means of $ k $ linear forms of $ f_{1}, \ldots, f_{r} $ equates the $k$-\emph{secant degree} of $X$. 

If $ a_{i} = a $ for all $ i \in \{1, \ldots, r\} $, then we deal with $ \mathbb{P}^{r-1}\times\p^n $ embedded through the Segre-Veronese map with $ \o(1,a) $, as we can see in Proposition 1.3. of \cite{DF} or in \cite{BBCC}. \\
Moreover, we remark that assuming to be in a perfect case in the sense of Definition \ref{d:perfectcases} is equivalent to the fact that $ \mathbb{P}(\mathcal{O}_{\p^n}(a_{1}) \oplus \ldots \oplus \mathcal{O}_{\p^n}(a_{r}) ) $ is a \emph{perfect} variety, i.e. $(n+r)|N$. \\
{Theorem \ref{thm:binforms0} has the following reformulation
(compare with Claim $5.3.$ and Proposition 1.14 of \cite{CR}) :}

\begin{cor}\label{c:identifiability}
If (\ref{eq:perfect}) and $ a_{1}+1 \geq k $ hold, then $ \mathbb{P}(\mathcal{O}_{\p^1}(a_{1}) \oplus \ldots \oplus \mathcal{O}_{\p^1}(a_{r}) ) $ is $ k $-identifiable, i.e. its $k$-secant degree is equal to $ 1 $.
\end{cor}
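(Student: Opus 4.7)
Corollary \ref{c:identifiability} is a direct translation of the ``if'' direction of the Ciliberto--Russo Theorem \ref{thm:binforms0} into the secant-variety language developed in \S\ref{subsec:projbundle}, so my plan is simply to match the two formulations and then quote Theorem \ref{thm:binforms0}.

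First, I would recall from \S\ref{subsec:projbundle} that $X = \mathbb{P}(\mathcal{O}_{\mathbb{P}^1}(a_1) \oplus \cdots \oplus \mathcal{O}_{\mathbb{P}^1}(a_r)) \subset \mathbb{P}^{N-1}$ parametrizes the rank-one polynomial vectors $(\lambda^{(1)}\ell^{a_1}, \ldots, \lambda^{(r)}\ell^{a_r})$. A Waring decomposition (\ref{eq:simwaring}) of $f$ with $k$ summands is therefore the same datum as a $(k-1)$-secant $(k-1)$-plane to $X$ passing through $[f]$: the $k$ points that such a plane cuts on $X$ recover the linear forms $\ell_i$ together with the scalar vectors $\lambda^j_i$, up to reordering. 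By Remark \ref{re:vuoto}, two distinct decompositions correspond to two disjoint fibres of $\pi_1\colon \sec_k(X) \to \mathbb{P}^{N-1}$ over $[f]$. Hence the number of Waring decompositions of a generic $f$, counted modulo the permutations allowed in Definition \ref{simdec}, coincides exactly with the $k$-secant degree of $X$.

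Second, I would check that the arithmetic lines up with the inequality in Theorem \ref{thm:binforms0}. For $n=1$ the perfectness condition (\ref{eq:perfect}) reads $\sum_i(a_i+1) = k(r+1)$, whence $k = \frac{\sum_i(a_i+1)}{r+1}$ and the assumption $a_1+1 \geq k$ is literally the inequality appearing in Theorem \ref{thm:binforms0}. Perfectness moreover forces $\Sec_k(X) = \mathbb{P}^{N-1}$ (otherwise the generic $f$ would admit no decomposition of length $k$, contradicting the existence statement in \S\ref{subsec:projbundle}), so ``generic $f \in {\mathrm Sym}^{a_1}\mathbb{C}^2 \oplus \cdots \oplus {\mathrm Sym}^{a_r}\mathbb{C}^2$'' and ``generic $[f]\in\Sec_k(X)$'' describe the same open subset of $\mathbb{P}^{N-1}$. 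Applying Theorem \ref{thm:binforms0} we conclude that the generic fibre of $\pi_1$ is a single point, i.e.\ the $k$-secant degree of $X$ equals $1$, which is $k$-identifiability by Definition \ref{def:secant}.

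There is no serious obstacle here: the argument is essentially bookkeeping. The only point requiring a moment's care is confirming that ``unique Waring decomposition up to permutation of summands'' in Definition \ref{simdec} is exactly ``generic fibre of $\pi_1$ equal to one point'', which is immediate because a general $(k-1)$-secant plane meets the non-degenerate variety $X$ in a well-defined unordered set of $k$ points, so reordering these points yields precisely the equivalence used in Definition \ref{simdec}.
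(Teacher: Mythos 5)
Your proposal is correct and is essentially the paper's own argument: the paper introduces Corollary \ref{c:identifiability} explicitly as a reformulation of Theorem \ref{thm:binforms0} via the dictionary of \S\ref{subsec:projbundle} (number of length-$k$ Waring decompositions $=$ $k$-secant degree of $X$), which is precisely the translation you carry out. The only tiny imprecision is your parenthetical that $\Sec_k(X)=\p^{N-1}$ follows from the general existence of \emph{some} decomposition --- that existence statement does not bound the number of summands by $k$; the correct justification is that Theorem \ref{thm:binforms0} itself asserts the existence of a rank-$k$ decomposition for generic $f$ (or one invokes non-defectivity of rational normal scrolls), but this does not affect the argument.
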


\begin{rem}
A formula for the dimension of the $k$-secant variety of the rational normal scroll $X$ for $n=1$ has been given in \cite[pag. 359]{CaJo} (with a sign mistake,
corrected in \cite[Prop. 1.14]{CR}).
\end{rem}

\begin{rem}\label{rem:d^n}
We may consider the Veronese variety $V:=V_{d,n}\subset\p^{{{d+n}\choose{n}}-1}$. Let $s-1={\rm cod} V$ then $s$ general points determine a unique $\p^s$ that intersects $V$ in $d^n$ points. The $d^n$ points are linearly independent
only if $d^n=s$ that is either $n=1$ or $(d,n)=2$. This shows that a
general vector $f=(f_1,\ldots,f_s)$ of forms of degree $d$ admits
${d^n}\choose{s}$ decompositions, see the table at the end of \S\ref{sec:compapp} for some numerical examples. On the other hand, from a
different perspective, dropping the requirement that the linear forms
giving the decompositions are linearly independent, this shows that
there is a unique set of $d^n$ linear forms that decompose the general
vector $f$. Note that this time only the forms and not the coefficient
are uniquely determined. We will not dwell on this point of view here
and left it for a forthcoming paper.
\end{rem}

\section{Nonabelian Apolarity and Identifiability}
\label{sec:binary}\label{sec:Nonabelian}

Let $f\in Sym^dV$. For any $e\in{\mathbb Z}$, Sylvester constructed the catalecticant map
$C_f\colon {\mathrm Sym}^eV^*\to {\mathrm Sym}^{d-e}V$ which is the contraction by $f$. Its main property is the inequality $\mathrm{rk\ }C_f\le\mathrm{rk\ } f$,
where the rank on left-hand side is the rank of a linear map, while the rank on the right-hand side has been defined in the Introduction.
In particular the $(k+1)$-minors of $C_f$ vanish on
the variety of polynomials with rank bounded by $k$, which is $\Sec_k(V_{d,n})$.

{The catalecticant map behaves well with polynomial vectors.
If $f\in \oplus_{i=1}^r{\mathrm Sym}^{a_i} V $, for any $e\in{\mathbb Z}$ we define the catalecticant map
$C_f\colon {\mathrm Sym}^eV^*\to \oplus_{i=1}^r{\mathrm Sym}^{a_i-e}V$ which is again the contraction by $f$. If $f$ has rank one, this means there exists
$\ell\in V$ and scalars $\lambda^{(i)}$ such that
$f=\left(\lambda^{(1)}\ell^{a_{1}}, \ldots, \lambda^{(r)}\ell^{a_{r}} \right)$  .
It follows that $\mathrm{rk\ }C_f\le 1$, since the image of $C_f$ is generated by
$\left(\lambda^{(1)}\ell^{a_{1}-e}, \ldots, \lambda^{(r)}\ell^{a_{r}-e} \right)$, which is zero
if and only if $a_r<e$. It follows by linearity the basic inequality
$$\mathrm{rk\ }C_f\le\mathrm{rk\ } f.$$ Again the $(k+1)$-minors of $C_f$ vanish on
the variety of polynomial vectors with rank bounded by $k$, which is $\Sec_k(X)$, where $X$ is the projective bundle defined in \S\ref{subsec:projbundle}.

A classical example is the following. Assume $V=\C^3$. London showed in \cite{Lon}(see also \cite{Sco}) that a pencil of ternary cubics
$f=(f_1,f_2)\in {\mathrm Sym}^3V\oplus{\mathrm Sym}^3V$ has border rank $5$
if and only if $\det C_f=0$ where
$C_f\colon  {\mathrm Sym}^2V^*\to V\oplus V$ is represented by a $6\times 6$
matrix (see \cite[Remark 4.2]{CaCh} for a modern reference). Indeed $\det C_f$ is the equation
of $\Sec_5(X)$ where $X$ is the Segre-Veronese variety $\left(\p^1\times\p^2,\o_X(1,3)\right)$. Note that $X$ is $5$-defective according to Definition
\ref{def:secant} and this phenomenon is pretty similar to the case of Clebsch quartics recalled in the introduction.

The following result goes back to Sylvester.
\begin{prop}[Classical Apolarity]
Let $f=\sum_{i=1}^kl_i^d\in Sym^dV$, let $Z=\{l_1,\ldots, l_k\}\subset V$.
Let $C_f\colon {\mathrm Sym}^eV^*\to {\mathrm Sym}^{d-e}V$ be the contraction by $f$. Assume the rank of $C_f$ equals $k$.
Then  $${\mathrm BaseLocus}\ker \left(C_f\right)\supseteq Z.$$
\end{prop}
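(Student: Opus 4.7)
The plan is to unpack the contraction map on a single power $\ell^d$, exploit linearity to get a clean formula for $C_f(g)$, and then use the rank hypothesis to extract the pointwise vanishing of elements of $\ker(C_f)$ on the points $\ell_i$.

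First I would record the following elementary computation: for a linear form $\ell\in V$ and $g\in \mathrm{Sym}^eV^*$, the contraction of $\ell^d$ by $g$ equals $c\cdot g(\ell)\,\ell^{d-e}\in \mathrm{Sym}^{d-e}V$, where $c$ is the multinomial constant coming from the pairing (and $g(\ell)\in\mathbb{C}$ is the evaluation of the degree-$e$ polynomial $g$ on $V$ at the point $\ell$). This is essentially the formula $\partial_g(\ell^d)=\frac{d!}{(d-e)!}g(\ell)\,\ell^{d-e}$ in the differential-operator interpretation of the catalecticant. I would just state it and move on; the verification is routine.

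Next, applying linearity to $f=\sum_{i=1}^k \ell_i^d$ yields
\begin{equation*}
C_f(g) \;=\; c\sum_{i=1}^k g(\ell_i)\,\ell_i^{d-e}\qquad\text{for every } g\in\mathrm{Sym}^eV^*.
\end{equation*}
In particular the image of $C_f$ is contained in the span of $\ell_1^{d-e},\ldots,\ell_k^{d-e}$ in $\mathrm{Sym}^{d-e}V$, so $\mathrm{rk}(C_f)\le k$. The hypothesis $\mathrm{rk}(C_f)=k$ forces this span to have dimension exactly $k$; equivalently, the powers $\ell_1^{d-e},\ldots,\ell_k^{d-e}$ are linearly independent in $\mathrm{Sym}^{d-e}V$.

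Finally, take any $g\in\ker(C_f)$. By the formula above, $\sum_{i=1}^k g(\ell_i)\,\ell_i^{d-e}=0$, and the linear independence just established forces $g(\ell_i)=0$ for every $i=1,\ldots,k$. Thus every element of $\ker(C_f)$, viewed as a degree-$e$ hypersurface in $\mathbb{P}(V)$, vanishes at each point $[\ell_i]$, so $Z\subseteq \mathrm{BaseLocus}\,\ker(C_f)$, as required.

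There is no real obstacle here beyond bookkeeping; the only point one must be careful about is the identification of $\mathrm{Sym}^eV^*$ with homogeneous polynomials of degree $e$ on $V$ so that evaluation $g(\ell)$ makes sense, and the verification that the rank-$k$ hypothesis is exactly what is needed to pass from the vector relation $\sum g(\ell_i)\ell_i^{d-e}=0$ to the scalar vanishings $g(\ell_i)=0$.
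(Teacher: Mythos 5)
Your proof is correct, but it follows a different route from the paper. You compute the contraction explicitly on powers, obtain $C_f(g)=c\sum_i g(\ell_i)\,\ell_i^{d-e}$, deduce from $\mathrm{rk}\,C_f=k$ that $\ell_1^{d-e},\ldots,\ell_k^{d-e}$ are linearly independent, and conclude directly that every $g\in\ker C_f$ vanishes at each $[\ell_i]$; in effect you prove the inclusion $\ker C_f\subseteq H^0(I_Z(e))$, which immediately gives the base-locus containment. The paper instead quotes the Apolarity Lemma of Ranestad--Schreyer to get the opposite inclusion $H^0(I_Z(e))\subseteq\ker C_f$ and then uses the rank hypothesis as a codimension count ($\operatorname{codim}H^0(I_Z(e))\le k=\operatorname{codim}\ker C_f$) to force equality $\ker C_f=H^0(I_Z(e))$. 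Your argument is more elementary and self-contained, avoiding the external reference; the paper's argument yields the slightly stronger conclusion that the kernel is \emph{exactly} the degree-$e$ part of $I_Z$, and, more importantly, it is phrased so as to generalize verbatim to the vector-bundle setting of Proposition 3.4 (Nonabelian Apolarity), where an explicit eigenvalue-type computation like yours is no longer available. Both establish the stated proposition; just be aware that your linear-independence step uses that the image of $C_f$ sits inside the span of the $k$ powers $\ell_i^{d-e}$, so rank $k$ forces that span to be $k$-dimensional --- which you did state, and which is fine.
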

\begin{proof} Apolarity Lemma (see \cite{RS}) says that $I_Z\subset f^{\perp}$, which reads in degree $e$ as
$H^0(I_Z(e))\subset \ker C_f$. Look at the subspaces in this inclusion as subspaces of $H^0(\p^n,\o(d))$. The assumption on the rank implies that (compare with the proof of \cite[ Prop. 4.3]{OO})
$${\mathrm codim\ }H^0(I_Z(e))\le k={\mathrm rk\ }C_f = {\mathrm codim\ } \ker C_f,$$
hence we have the equality $H^0(I_Z(e))=\ker C_f$. It follows $${\mathrm BaseLocus}\ker \left(C_f\right)
={\mathrm BaseLocus}H^0(I_Z(e))\supseteq Z.$$\end{proof}

Classical Apolarity is a powerful tool to recover $Z$ from $f$,
hence it is a powerful tool to write down a minimal Waring decomposition of $f$.

The following Proposition \ref{prop:nonabelian} is a further generalization and it reduces to classical apolarity when $(X,L)=(\p V,{\o}(d))$
and $E={\o}(e)$ is a line bundle. The vector bundle $E$ may have larger rank and explains the name of  
Nonabelian Apolarity.

We recall that the natural map $H^0(E)\otimes H^0(E^*\otimes L)\to H^0(L)$
induces the linear map $H^0(E)\otimes H^0(L)^*\to H^0(E^*\otimes L)^*$,
then for any $f\in H^0(L)^*$ we have the contraction map
$A_f\colon H^0(E)\to H^0(E^*\otimes L)^*$.

\begin{prop}[Nonabelian Apolarity]\label{prop:nonabelian}\cite[ Prop. 4.3]{OO}
Let $X$ be a variety, $L\in Pic(X)$ a very ample line bundle which gives the embedding
$X\subset \p\left(H^0(X,L)^*\right)=\p W$. Let $E$ be a vector bundle on $X$.
Let $f=\sum_{i=1}^k w_i\in W$ with $z_i=[w_i]\in\p W$, let $Z=\{z_1,\ldots, z_k\}
\subset\p W$.
It is induced $A_f\colon H^0(E)\to H^0(E^*\otimes L)^*$.
Assume that $\mathrm{rk}A_f=k\cdot \mathrm{rk}E$. 
Then
 ${\mathrm {BaseLocus}}\ker \left(A_f\right)\supseteq Z$.
\end{prop}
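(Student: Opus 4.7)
The plan is to mirror the proof of the classical Apolarity Proposition given just above, replacing the inclusion $H^0(I_Z(e))\subseteq \ker C_f$ by a vector-bundle analogue $H^0(E\otimes I_Z)\subseteq \ker A_f$, and then forcing equality by comparing codimensions using the rank hypothesis.

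First I would unwind the definition of $A_f$ pointwise. Since $L$ is very ample and embeds $X$ into $\mathbb{P}W$, each closed point $z\in X$ lifts to a line of functionals $\langle w\rangle \subseteq W = H^0(X,L)^*$, namely (up to scalar) the evaluation at $z$. For such a $w$ the contraction $A_w\colon H^0(E)\to H^0(E^*\otimes L)^*$ factors through the fiber evaluation $\mathrm{ev}_z\colon H^0(E)\to E_z$: the value $A_w(s)(t)$ for $t\in H^0(E^*\otimes L)$ is computed as $\langle w, s\cdot t\rangle = (s\cdot t)(z) = \langle s(z), t(z)\rangle$ under the fiber pairing $E_z\otimes (E^*\otimes L)_z\to L_z$. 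In particular $s(z)=0$ forces $A_w(s)=0$.

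Applying this with $f=\sum_{i=1}^k w_i$ and using linearity of $w\mapsto A_w$, every section $s\in H^0(E\otimes I_Z)$ satisfies $A_f(s)=\sum_i A_{w_i}(s)=0$, giving the inclusion
\[
H^0(E\otimes I_Z)\;\subseteq\;\ker A_f.
\]
Vanishing at each point $z_i$ imposes at most $\mathrm{rk}\,E$ linear conditions on $H^0(E)$, hence $\mathrm{codim}\,H^0(E\otimes I_Z)\le k\cdot\mathrm{rk}\,E$. On the other hand the hypothesis gives $\mathrm{codim}\,\ker A_f=\mathrm{rk}\,A_f=k\cdot\mathrm{rk}\,E$. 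Combined with the inclusion, this forces $H^0(E\otimes I_Z)=\ker A_f$. Consequently $\mathrm{BaseLocus}\,\ker(A_f)$ coincides with $\mathrm{BaseLocus}\,H^0(E\otimes I_Z)$, which visibly contains $Z$.

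The main obstacle, and the place where care is needed, is the pointwise identification of $A_w$ with fiber evaluation: one must track how a choice of representative $w\in W$ over $z$ rescales the pairing $E_z\otimes(E^*\otimes L)_z\to L_z\cong\mathbb{C}$, and make sure this is consistent across the decomposition $f=\sum w_i$ so that the identity $A_f=\sum A_{w_i}$ really is a sum of the correctly normalized local contractions. Once this bookkeeping is in place, the rest is exactly the codimension-matching trick from the classical proof.
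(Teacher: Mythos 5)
Your argument is correct and is essentially the proof the paper intends: the paper does not reprove this statement but cites \cite[Prop.~4.3]{OO}, whose argument (mirrored in the paper's own proof of Classical Apolarity immediately above) is exactly your two steps --- the inclusion $H^0(E\otimes I_Z)\subseteq\ker A_f$ via the pointwise identification of $A_{w_i}$ with fiber evaluation at $z_i$, followed by the codimension count $\operatorname{codim}H^0(E\otimes I_Z)\le k\cdot\mathrm{rk}\,E=\mathrm{rk}\,A_f$ forcing equality. The normalization issue you flag is harmless since rescaling $w_i$ rescales $A_{w_i}$ without changing its kernel, and only the kernel of the sum matters.
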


In all cases we apply the Theorem we will compute separately $\mathrm{rk}A_f$.

Nonabelian Apolarity enhances the power of Classical Apolarity
and may detect a minimal Waring decomposition of a polynomial in some cases when Classical Apolarity fails, see next Proposition \ref{prop:nonabelian_applied}.
Our main examples start with the quotient bundle $Q$ on $\p^n=\p(V)$, it has rank $n$ and it is defined by the Euler exact sequence
$$0\rig{}\o(-1)\rig{}\o\otimes V^*\rig{}Q\rig{}0.$$

Let $L=\o(d)$ and $E=Q(e)$. Any $f\in {\mathrm Sym}^d\C^3$ induces
the contraction map
\begin{equation}\label{eq:contractionq}
A_f\colon H^0(Q(e))\to H^0(Q^*(d-e))^*\simeq H^0(Q(d-e-1))^*.
\end{equation}

The following was the argument used in \cite{OO} to prove cases (ii) and (iii)
of \ref{eq:classiclist}.
\begin{prop}
  \label{prop:nonabelian_applied} Let $X$ be a variety, $L\in Pic(X)$ a
  very ample line bundle and $E$ a vector bundle on
  $X$ with ${\rm rk}E=\dim X$. Let $[f]\in \p(H^0(L)^*)$ be a general point, $k=
  \frac{h^0(X,L)}{\dim X+1}$, and 
$A_f\colon H^0(E)\to H^0(E^*\otimes L)^*$ the {contraction} map.
Assume that $\mathrm{rk}A_f=r\cdot \mathrm{rk}E$, and 
$c_{\rm rkE}(E)=k$. Assume moreover that
for a specific $f$ the base locus of $\ker A_f$ is given by $k$ points.
Then the $k$-secant map 
$$\pi_k:\sec_k(X)\to\p(H^0(L)^*)$$
is birational. The assumptions are verified in the following cases, corresponding to (ii) and (iii) of (\ref{eq:classiclist}).
$$\begin{array}{c|c|c|c}
  (X,L) &H^0(L)& \textrm{rank\ }&E\\
\hline\\
 (\p^2,\o(5))& {\mathrm Sym}^5\C^3& 7&Q_{\p^2}(2)\\
(\p^3,\o(3))&{\mathrm Sym}^3\C^4& 5&Q_{\p^3}^*(2)\\
\end{array}$$
\end{prop}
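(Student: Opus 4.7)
The plan is to combine Nonabelian Apolarity (Proposition \ref{prop:nonabelian}) with the top Chern class hypothesis and an upper semicontinuity argument, so as to recover, for a generic $f$, the set of $k$ points on $X$ underlying any minimal Waring decomposition.

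First, for any such decomposition $f=\sum_{i=1}^k w_i$, put $Z=\{[w_1],\ldots,[w_k]\}\subset X$. The rank hypothesis $\mathrm{rk}\,A_f=k\cdot\mathrm{rk}\,E$ triggers Proposition \ref{prop:nonabelian}, yielding $Z\subseteq{\mathrm{BaseLocus}}\ker(A_f)$ and hence $\len({\mathrm{BaseLocus}}\ker A_f)\ge k$. For the opposite bound one uses $\mathrm{rk}\,E=\dim X$: a general section of $E$ then has $0$-dimensional zero scheme of length $c_{\mathrm{rk}E}(E)=k$, and the base locus of $\ker A_f$ is contained in every such zero scheme, hence has length at most $k$ whenever it is already $0$-dimensional.

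The zero-dimensionality is ensured by the extra hypothesis: for a specific $f_0$ the base locus is made of exactly $k$ reduced points. Since $\mathrm{rk}\,A_f$ is lower semicontinuous in $f$ and attains its maximum on an open set containing $f_0$, the kernels $\ker A_f$ fit into a vector bundle over a Zariski open neighbourhood $U\ni f_0$; upper semicontinuity of the length of the scheme-theoretic base locus along this family forces $\len({\mathrm{BaseLocus}}\ker A_f)\le k$ for all $f\in U$. Combined with $Z\subseteq{\mathrm{BaseLocus}}\ker A_f$ this gives equality ${\mathrm{BaseLocus}}\ker A_f=Z$, so $Z$, and hence the secant plane $\langle Z\rangle$ containing $[f]$, is recovered canonically from $f$. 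The generic fibre of $\pi_k\colon\sec_k(X)\to\p(H^0(L)^*)$ is therefore a single point; since both sides have the same dimension by the perfect numerics, $\pi_k$ is birational.

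For the tabulated cases the Chern class verification is routine via the Euler sequence: on $\p^2$, $c(Q)=1+H+H^2$ so $c_2(Q(2))=1+2+4=7=h^0(\o_{\p^2}(5))/3$; on $\p^3$, $c(Q^*)=1-H+H^2-H^3$ so $c_3(Q^*(2))=-1+2-4+8=5=h^0(\o_{\p^3}(3))/4$. The rank condition on $A_f$ and the production of a specific $f_0$ whose kernel cuts out $k$ reduced points are checked by explicit constructions as in \cite{OO}. The main technical obstacle is precisely the latter: ensuring that the base locus at the chosen specimen is genuinely reduced and zero-dimensional of the right cardinality, thereby anchoring the semicontinuity step.
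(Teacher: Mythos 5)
Your argument is correct and is essentially the paper's own: Nonabelian Apolarity gives $Z\subseteq{\mathrm{BaseLocus}}\,\ker A_f$ for every rank-$k$ decomposition, the top Chern class bounds the length of that base locus by $k$ once it is zero-dimensional, zero-dimensionality is propagated from the specific $f$ by semicontinuity, and the resulting equality ${\mathrm{BaseLocus}}\,\ker A_f=Z$ pins down the unique secant plane — this is precisely the template the paper uses (for this Proposition via \cite{OO}, and spelled out in the proof of Theorem \ref{thm:nonabelian_applied2}). Your Chern class computations for $Q_{\p^2}(2)$ and $Q^*_{\p^3}(2)$ are correct, and, like you, the paper defers the verification of the rank condition and the existence of the specific $f$ with reduced zero-dimensional base locus to explicit computation.
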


Specific $f$'s in the statement may be found as random polynomials in \cite{M2}.
In order to prove also cases (iv) and (v) of (\ref{eq:classiclist})
and moreover our Theorem \ref{thm:main334} we need to extend this result
as follows

\begin{thm}
  \label{thm:nonabelian_applied2} Let $X\rig{\pi} Y$ be a projective bundle, $L\in Pic(X)$ a
  very ample line bundle and $F$ a vector bundle on
  $Y$, we denote $E=\pi^*F$. Let $[f]\in \p(H^0(L)^*)$ be a general point, $k=
  \frac{h^0(X,L)}{\dim X+1}$, and 
$A_f\colon H^0(E)\to H^0(E^*\otimes L)^*$ the {contraction} map.
Let $a=\dim\ker A_f$. Assume that $\mathrm{rk}A_f=k\cdot \mathrm{rk}E$ and that
 % \begin{itemize}
 $(c_{\mathrm{rk }F}F)^{a}=k$. Assume moreover that
for a specific $f$ the base locus of $\ker A_f$ is given by $k$ 
fibers of $\pi$.
Then  the $k$-secant map 
$$\pi_k:\sec_k(X)\to\p(H^0(L)^*)$$
is birational.  The assumptions are verified in the following cases.
{\footnotesize
$$
\begin{array}{l|l|c|c|l}
  (X,L) &H^0(L)& \textrm{rank\ }&F&\dim\ker A_f\\
\hline\\
\left(\p\left(\oplus_{i=1}^r\o_{\p^1}(a_i)\right),\o_X(1)\right)&\oplus_{i=1}^r
{\mathrm Sym}^{a_i}\C^2&
%{\tiny k:=\frac{1}{r+1}\sum_{i=1}^r(a_i+1)}
{\tiny k:=\frac{\sum_{i=1}^r a_i+1}{r+1}}
&\o_{\p^1}(k)&1 (\textrm{if\ }k\le a_1+1)\\
\left(\p\left(\o_{\p^2}(2)^4\right),\o_X(1)\right)& ({\mathrm Sym}^2\C^3)^{\oplus 4}&
4&\o_{\p^2}(2)&2\\
\left(\p\left(\o_{\p^2}(2)\oplus\o_{\p^2}(3)\right),\o_X(1)\right)&{\mathrm Sym}^2\C^3\oplus {\mathrm Sym}^3\C^3&4&\o_{\p^2}(2)&2
\\
\left(\p\left(\o_{\p^2}(3)^2\oplus\o_{\p^2}(4)\right),\o_X(1)\right)&\left({\mathrm Sym}^3\C^3\right)^{\oplus{2}}\oplus{\mathrm Sym}^4\C^3&7&Q_{\p^2}(2) &1
\end{array}$$}
 % \end{itemize}
\end{thm}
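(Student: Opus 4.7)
The plan is to extend the argument of \cite{OO} behind Proposition \ref{prop:nonabelian_applied} by exploiting the projective-bundle structure $\pi:X\to Y$. The guiding principle is that, for generic $f=\sum_{i=1}^k w_i$ with $z_i=[w_i]\in X$, the $k$ summands are recovered (up to a permutation) by computing the base locus of $\ker A_f$. In the regime $\mathrm{rk}\,E=\dim X$ of \cite{OO}, $\ker A_f$ is one-dimensional and its zero scheme on $X$ has the expected length $c_{\mathrm{top}}(E)=k$. Here $\mathrm{rk}\,E$ need not match $\dim X$, and we compensate by using the full $a$-dimensional kernel, which under the identification $H^0(X,E)=H^0(Y,F)$ (projection formula together with $\pi_*\mathcal{O}_X=\mathcal{O}_Y$) cuts out a base locus on the lower-dimensional base $Y$ of expected length $(c_{r_F}F)^a$, where $r_F:=\mathrm{rk}\,F$.

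First I would apply Nonabelian Apolarity (Proposition \ref{prop:nonabelian}): the rank hypothesis $\mathrm{rk}\,A_f=k\cdot\mathrm{rk}\,E$ is exactly the non-degeneracy required, and yields $\{z_1,\ldots,z_k\}\subseteq\mathrm{BaseLocus}(\ker A_f)$ on $X$. Since $E=\pi^*F$, base loci of linear subsystems of $H^0(X,E)$ are $\pi$-preimages of the corresponding base loci in $H^0(Y,F)$, so viewing $\ker A_f$ as an $a$-dimensional subspace of $H^0(Y,F)$, its base locus on $Y$ contains the $k$ points $y_i:=\pi(z_i)$.

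Next comes a Chern-class count. The common vanishing locus of $a$ sufficiently general sections of the rank-$r_F$ bundle $F$ on $Y$, when of the expected codimension $a\cdot r_F=\dim Y$, is a zero-dimensional scheme of length $(c_{r_F}F)^a$. The specific-$f$ hypothesis asserts this expected behavior is achieved at least once; since ``base locus of $\ker A_f$ is zero-dimensional of length $k$'' is an open condition on $f$, it then holds on a dense open set containing the specific $f$. For such generic $f$ the base locus on $Y$ is forced to equal $\{y_1,\ldots,y_k\}$, so the linear forms $\ell_i$ (once $Y$ is identified with the projective space of linear forms, as in each tabulated case) are recovered from $f$. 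The scalars $\lambda_i^j$ are then uniquely determined by solving the linear systems $f_j=\sum_i\lambda_i^j\ell_i^{a_j}$ for $j=1,\ldots,r$; one checks case by case that $k\le\binom{a_j+n}{n}$, so that generically the $\ell_i^{a_j}$ are linearly independent. Hence the general fibre of $\pi_k$ is a single permutation orbit of summands, i.e.\ $\pi_k$ is birational.

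The main obstacle is producing, in each row of the table, a specific $f$ whose base locus of $\ker A_f$ consists of exactly $k$ fibres. For the multi-degree binary forms (generalizing case (i) of \eqref{eq:classiclist}) one adapts the classical Sylvester/Jacobi decomposition of pencils of binary forms and reads the apolar data off directly, noting also that $H^0(\mathcal{O}_{\mathbb{P}^1}(k))\to H^0(F)$ has the predicted kernel of dimension $1$ precisely under $k\le a_1+1$. For the ternary cases (iv), (v) the classical constructions (four ternary conics determining a $\mathbb{P}^3$ in $\mathbb{P}^5$ that cuts the Veronese in four points, and Roberts' construction for $\mathrm{Sym}^2\mathbb{C}^3\oplus\mathrm{Sym}^3\mathbb{C}^3$) exhibit test vectors whose apolar schemes consist of the required number of fibres, in agreement with $(c_1\mathcal{O}_{\mathbb{P}^2}(2))^2=4$. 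For the genuinely new row corresponding to Theorem \ref{thm:main334} no classical construction is available: one must produce an explicit polynomial vector in $(\mathrm{Sym}^3\mathbb{C}^3)^{\oplus 2}\oplus\mathrm{Sym}^4\mathbb{C}^3$ whose contraction map has one-dimensional kernel and whose unique section of $F=Q_{\mathbb{P}^2}(2)$ vanishes on exactly $c_2(Q_{\mathbb{P}^2}(2))=7$ distinct points of $\mathbb{P}^2$. This is where the numerical experiments with Bertini and \texttt{Macaulay2} referred to in the Introduction are essential: they provide the candidate $f$ whose subsequent symbolic verification of the base-locus count closes the proof.
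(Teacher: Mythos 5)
Your proposal follows essentially the same route as the paper: Nonabelian Apolarity to place the decomposition points in the base locus of $\ker A_f$, the identification $H^0(X,\pi^*F)\simeq H^0(Y,F)$ to push the base locus down to $Y$, the Chern number $(c_{\mathrm{rk}F}F)^a=k$ to bound its length, semicontinuity from the specific $f$, and a final linear step for the scalars, with the specific $f$ supplied by random polynomials in Macaulay2. The one place you diverge is the uniqueness of the $\lambda_i^j$: you argue via the case-by-case check $k\le\binom{a_j+n}{n}$ and linear independence of powers of general linear forms, whereas the paper deduces independence of the fibers $\pi^{-1}(\ell_i)$ from non-defectivity (itself forced by the zero-dimensionality of the base locus) together with Terracini's Lemma. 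The two are equivalent in effect — independence of the fibers is exactly the simultaneous linear independence of the $\ell_i^{a_j}$ for every $j$ — but the paper's version needs no extra numerical hypothesis beyond those already stated, while yours quietly adds one (harmless for every row of the table, but not part of the theorem's hypotheses); otherwise the argument is sound.
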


\begin{proof} By Proposition \ref{prop:nonabelian} we have
 $Z\subset Baselocus(\ker A_f)$, where the base locus can be found by the common zero locus of some sections $s_1,\ldots, s_a$
of $E$ which span $\ker A_f$.
Since $E=\pi^*F$ and $H^0(X,E)$ is naturally isomorphic to $H^0(Y,F)$, the zero locus of each section of $E$ corresponds to the pullback
through $\pi$ of the zero locus of the corresponding section of $F$.
By the assumption on the top Chern class of $F$ we expect 
that the base locus of $\ker A_f$ contains $k=\mathrm{length\ }(Z)$ fibers of the projective bundle $X$. The hypothesis guarantees that this expectation
is realized for a specific polynomial vector $f$. By semicontinuity, it is realized for the generic $f$. This determines the forms $l_i$ in (\ref{eq:simwaring}) for a generic polynomial vector $f$. It follows that $f$ is in the linear span of the fibers $\pi^{-1}(l_i)$ where $Z=\{l_1,\ldots, l_a\}$. Fix representatives for the forms $l_i$ for $i=1,\ldots, k$. Now the scalars $\lambda_i^j$
in (\ref{eq:simwaring})
are found by solving a linear system. Our assumptions imply that $X$ is not $k$-defective, otherwise
the base locus of $\ker A_f$ should be positive dimensional. In particular the tangent spaces at points in $Z$,
which are general, are independent by Terracini Lemma. Since each $\pi$-fiber  is contained in the corresponding tangent space,
it follows that the fibers $\pi^{-1}(l_i)$ corresponding to $l_i\in Z$ are independent. It follows that the scalars $\lambda_i^j$
in (\ref{eq:simwaring}) are uniquely determined and we have generic identifiability.  The check that the assumptions are verified in the cases listed has been perfomed
with random polynomials with the aid of Macaulay2 package \cite{M2}.
In all these cases, by the projection formula we have the natural isomorphism $H^0(X,E^*\otimes L)\simeq H^0(Y,F\otimes\pi_*L)$. 
\end{proof}

Note that the first case in the list of Theorem \ref{thm:nonabelian_applied2} corresponds to Ciliberto-Russo Theorem
\ref{thm:binforms0}, in this case $H^0(E)={\mathrm Sym}^k\C^2$ has dimension $k+1$,
$H^0(E^*\otimes L)={\mathrm Sym}^{a_1-k}\C^2\oplus\ldots\oplus {\mathrm Sym}^{a_r-k}$
has dimension $\sum_{i=1}^r(a_i-k+1)= k$ (if $k\le a_1+1)$ and the contraction map $A_f$ has rank $k$,
with one-dimensional kernel. 

The last case in the list of Theorem \ref{thm:nonabelian_applied2} corresponds to Theorem \ref{thm:main334}.
A general vector $f\in ({\mathrm Sym}^3\C^3)^{\oplus 2}\oplus {\mathrm Sym}^4\C^3$
induces the contraction $A_f\colon H^0(Q(2))\to H^0(Q)\oplus H^0(Q)\oplus H^0(Q(1))$
with one-dimensional kernel. Each element in the kernel vanishes on $7$ points which give the seven  Waring summands of $f$.
}

Note also that $\left(\p\left(\o_{\p^2}(2)^4\right),\o_X(1)\right)$
coincides with Segre-Veronese variety $(\p^3\times\p^2,\o(1,2))$

\begin{rem}\label{rem:catalecticant} The assumption  $ a_{1}+1 \geq k $ in \ref{thm:binforms0}
is equivalent to $\frac{1}{r+1}\sum_{i=1}^r(a_i+1)\le a_1+1$
which means that $a_i$ are ``balanced''.
\end{rem}

We conclude this section showing how the existence of a unique decomposition determines the
birational geometry of the varieties parametrizing higher rank
decompositions. 
The following is just a slight generalization of \cite[Theorem 4.4]{MM}
\begin{thm}
  \label{thm:unirat} Let $X\subset\p^N$ be such that the $k$-secant
  map $\pi_k:\sec_k(X)\to \p^N$ is birational. Assume that $X$ is
  unirational then for $p\in \p^N$
  general the variety $\pi_h^{-1}(p)$  is unirational for any $h\geq
  k$, in particular it is irreducible.
\end{thm}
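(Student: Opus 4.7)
The plan is to mimic the proof of \cite[Theorem 4.4]{MM}: since $\pi_k$ is birational, its rational inverse $\sigma_k\colon\p^N\dashrightarrow\sec_k(X)$ assigns to a general point $q\in\p^N$ its unique rank-$k$ decomposition on $X$. The strategy is to use $\sigma_k$ as a building block for an explicit dominant rational map onto $\pi_h^{-1}(p)$ from a unirational parameter space, exploiting the fact that any rank-$h$ decomposition of $p$ can be split into $h-k$ freely chosen summands plus a residual that still lives in $\sec_k(X)$.

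Concretely, for a fixed general $p\in\p^N$ I would introduce the incidence variety
$$ Y_h:=\{((P_1,\ldots,P_{h-k}),\bar p)\in X^{h-k}\times\p^N\mid\bar p\in\langle p,P_1,\ldots,P_{h-k}\rangle\}. $$
For a generic tuple $(P_1,\ldots,P_{h-k})$ the span $\langle p,P_1,\ldots,P_{h-k}\rangle$ has projective dimension $h-k$, so $Y_h$ is a $\p^{h-k}$-bundle over a dense open subset of $X^{h-k}$; since $X$ is unirational, so is $Y_h$. I would then define the rational map
$$ \phi_h\colon Y_h\dashrightarrow\pi_h^{-1}(p),\quad ((P_1,\ldots,P_{h-k}),\bar p)\mapsto[\langle P_1,\ldots,P_{h-k},Q_1,\ldots,Q_k\rangle], $$
where $\sigma_k(\bar p)=[\langle Q_1,\ldots,Q_k\rangle]$. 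Since $\bar p$ belongs to both $\langle p,P_1,\ldots,P_{h-k}\rangle$ and $\langle Q_1,\ldots,Q_k\rangle$, the spans $\langle\bar p,P_1,\ldots,P_{h-k}\rangle$ and $\langle Q_1,\ldots,Q_k,P_1,\ldots,P_{h-k}\rangle$ coincide and contain $p$, so the output genuinely lies in $\pi_h^{-1}(p)$.

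The main step is to prove that $\phi_h$ is dominant. Starting from a general $[\Lambda]=[\langle P_1,\ldots,P_h\rangle]\in\pi_h^{-1}(p)$, I would split the summands and consider $\Lambda':=\langle P_{h-k+1},\ldots,P_h\rangle$ and $\Lambda'':=\langle p,P_1,\ldots,P_{h-k}\rangle$, of projective dimensions $k-1$ and $h-k$. Both are contained in the $(h-1)$-plane $\Lambda$, and the Grassmann formula identifies $\Lambda'\cap\Lambda''$ with a single point $\bar p$. Then $((P_1,\ldots,P_{h-k}),\bar p)\in Y_h$, and since the rank-$k$ decomposition of a generic point is unique we must have $\sigma_k(\bar p)=[\langle P_{h-k+1},\ldots,P_h\rangle]$; hence $\phi_h$ sends this pair back to $[\Lambda]$.

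The hard part will be checking that the various genericity conditions fit together: that the $P_i$ are in general linear position (so that $\dim\Lambda=h-1$ and $\Lambda'\cap\Lambda''$ is a single reduced point), that $X$ is not $h$-defective (so that Terracini's Lemma gives the expected dimension of $\sec_h(X)$, a consequence of the birationality of $\pi_k$, along the same lines as in the proof of Theorem \ref{thm:nonabelian_applied2}), and that $\bar p$ falls in the Zariski-open locus on which $\sigma_k$ is defined. Granted these, the dominance of $\phi_h\colon Y_h\dashrightarrow\pi_h^{-1}(p)$ exhibits $\pi_h^{-1}(p)$ as the image of a unirational variety, hence unirational, and in particular irreducible.
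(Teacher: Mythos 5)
Your proof is correct and follows essentially the same route as the paper's: there the fibre is parametrized by $(X\times\p^1)^{h-k}$, sending $(x_i,\lambda_i)$ to the span of the $x_i$ together with the unique rank-$k$ decomposition of the residual $p-\sum\lambda_i x_i$, which is birationally the same as your $\p^{h-k}$-bundle $Y_h$ (that residual point is exactly your $\bar p$). The genericity checks you flag at the end are likewise left implicit in the paper.
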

\begin{proof} Let $p\in \p^N$ be a general point, then for $h>k$ we
  have  $\dim
  \pi_h^{-1}(p)=(h+1)\dim X-1-N=(h-k)(\dim X+1)$. Note that, for $q\in
  \p^N$ general, a general
  point in $x\in\pi^{-1}_h(q)$ is uniquely associated to a set of $h$
  points $\{x_1,\ldots,x_h\}\subset X$ and an $h$-tuple
  $(\lambda_1,\ldots,\lambda_h)\in{\C^h}$ with the requirement that
$$q=\sum\lambda_ix_i .$$ Therefore the birationality of $\pi_k$
 allows to associate, to a general point in $q\in \p^N$, its unique
 decomposition in sum of $k$ factors. That is
 $\pi_k^{-1}(q)=(q,[\Lambda_k(q)])$ for a general point $q\in\p^N$.
Via this identification  we may define a map
$$\psi_h:(X\times\p^1)^{h-k}\map\pi^{-1}_h(p) $$
given by
\begin{eqnarray*}(x_1,\lambda_1,\ldots,x_{h-k},\lambda_{h-k})\mapsto
  (p,[\langle x_1,\ldots,x_{h-k},\Lambda_k(p-\lambda_1x_1-\ldots-\lambda_{h-k}x_{h-k})\rangle]).
\end{eqnarray*}
The map $\psi_h$ is clearly generically finite, of degree
${h}\choose{n+1}$, and dominant. This is sufficient to show the claim.
\end{proof}
Theorem \ref{thm:unirat} applies to all  decompositions
that admit a unique form
\begin{cor}
  Let $f=(f_1,\ldots,f_r)$ be a vector of general homogeneous
  forms. If $f$ has a unique  Waring
  decomposition of rank $k$. Then the set of decompotions of rank
  $h>k$  is parametrized by a unirational variety.
\end{cor}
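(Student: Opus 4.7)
The plan is to deduce the corollary directly from Theorem~\ref{thm:unirat} using the geometric dictionary set up in \S\ref{subsec:projbundle}. There is no new ingredient beyond translating the hypotheses into the secant language and checking unirationality of the underlying variety.

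First, I would invoke the dictionary. Let $X = \mathbb{P}(\mathcal{O}_{\p^n}(a_1)\oplus\cdots\oplus\mathcal{O}_{\p^n}(a_r))$ with its canonical embedding into $\p^{N-1}$, where $N = \sum_{i=1}^{r}\binom{a_i+n}{n}$. As observed in \S\ref{subsec:projbundle}, points of the fiber $\pi_h^{-1}([f])$ of the $h$-secant map $\pi_h : \sec_h(X) \to \p^{N-1}$ are in bijection with the Waring decompositions of $f$ of length $h$ (up to reordering of summands).

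Second, I would reinterpret the assumption. The statement that a general $f$ admits a unique Waring decomposition of rank exactly $k$ means two things simultaneously: that the image $\Sec_k(X) = \pi_k(\sec_k(X))$ fills all of $\p^{N-1}$ (otherwise a general $f$ would have no rank-$k$ decomposition), and that the general fiber of $\pi_k$ is a single point. Together these are exactly the statement that $\pi_k$ is birational. For the other hypothesis of Theorem~\ref{thm:unirat}, I would note that $X$ is a projective bundle over $\p^n$, hence rational, and in particular unirational.

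With both hypotheses of Theorem~\ref{thm:unirat} verified for this $X$, the conclusion is immediate: for a general $p = [f] \in \p^{N-1}$ and every $h \geq k$, the fiber $\pi_h^{-1}([f])$ is unirational (and irreducible). Translating back through the dictionary, this is precisely the statement that the variety of length-$h$ decompositions of $f$ is unirational. There is no substantive obstacle here; the corollary is essentially a repackaging of Theorem~\ref{thm:unirat} specialized to the projective bundle $X$ governing simultaneous Waring decompositions.
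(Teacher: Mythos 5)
Your proposal is correct and is exactly the argument the paper intends: the corollary is stated as an immediate consequence of Theorem~\ref{thm:unirat}, obtained by identifying Waring decompositions with fibers of the secant maps of the projective bundle $X=\mathbb{P}(\mathcal{O}_{\p^n}(a_1)\oplus\cdots\oplus\mathcal{O}_{\p^n}(a_r))$, noting that uniqueness of the general rank-$k$ decomposition means $\pi_k$ is birational, and that $X$ is rational hence unirational. Nothing is missing.
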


\begin{rem} Let's go back to our starting example (\ref{eq:2quadrics}) and specialize $f_1=\sum_{i=0}^nx_i^2$ to the euclidean quadratic form.
Then any minimal Waring decomposition of $f_1$ consists of $n+1$ orthogonal summands, with respect to the euclidean form.
It follows that the decomposition (\ref{eq:2quadrics}) is equivalent to the diagonalization of $f_2$ with orthogonal summands.
Over the reals, this is possible for any $f_2$ by the Spectral Theorem.

Also Robert's Theorem, see (v) of (\ref{eq:classiclist}), has a similar interpretation. If $f_1=x_0^2+x_1^2+x_2^2$ and $f_2\in{\mathrm Sym}^3\C^3$ is general, the unique
Waring decomposition of the pair $(f_1,f_2)$ consists in chosing four representatives of lines $\{l_1,\ldots, l_4\}$
and scalars $\lambda_1,\ldots, \lambda_4$ such that \begin{equation}\label{eq:robertortho}
\left\{\begin{array}{rcl}f_1&=&\sum_{i=1}^4l_i^2\\
\hspace{0.2cm} \\
f_2&=&\sum_{i=1}^4\lambda_il_i^3\end{array}\right.
\end{equation} Denote by $L$ the $3\times 4$ matrix whose $i$-th column is given by the coefficients of $l_i$.
Then the first condition in (\ref{eq:robertortho}) is equivalent to the equation
\begin{equation}\label{eq:llt}LL^t=I.\end{equation}

This equation generalizes orthonormal basis and the column of $L$ makes a {\it Parseval} frame, according to
\cite{CMS} \S 2.1. So Robert's Theorem states that the general ternary cubic has a unique decomposition
consisting of a Parseval frame.

In general a Parseval frame for a field $F$ is given by $\{l_1,\ldots, l_n\}\subset F^d$ such that
the corresponding $d\times n$ matrix $L$ satisfies the condition $LL^t=I$. This is equivalent to the equation
$\sum_{i=1}^n(\sum_{j=1}^d l_{ji}x_j)^2=\sum_{i=1}^dx_i^2$, so again to a Waring decomposition with $n$ summands
of the euclidean form in $F^d$.
This makes a connection of our paper with \cite{ORS}, which studies frames in the setting of secant varieties and tensor decomposition.
For example equation (7) in \cite{ORS} define a solution to (\ref{eq:llt}) with the additional condition that the four columns have unit norm.
Note that equation (8) in \cite{ORS} define a Waring decomposition of the pair $(f_1, T)$. Unfortunately the additional condition about unitary norm
does not allow to transfer directly the results of \cite{ORS} to our setting, but we believe this connection deserves to be pushed further.

It is interesting to notice that the decompositions of moments $M_2$ and $M_3$ in \cite[\S 3]{AGHKT} is a (simultaneous) Waring decompositions of the quadric $M_2$ and the cubic $M_3$.
\end{rem}

\section{Computational approach}\label{sec:compapp}

In this section we describe how we can face Question 1 and Question 2, introduced in \S\ref{sec:intr}, from the computational analysis point of view.\\ 
\indent With the aid of Bertini \cite{Be}, \cite{BHSW} and Macaulay2 \cite{M2} software systems, we can construct algorithms, based on homotopy continuation techniques and monodromy loops, that, in the spirit of \cite{HOOS}, yield the number of Waring decompositions of a generic polynomial vector $ f = (f_{1}, \ldots, f_{r}) \in {\mathrm Sym}^{a_1} \C^{n+1} \oplus \ldots \oplus {\mathrm Sym}^{a_r} \C^{n+1} $ with high probability. Precisely, given $ n,r,a_{1}, \ldots, a_{r}, k \in \mathbb{N} $ satisfying (\ref{eq:perfect}) and coordinates $ x_{0}, \ldots, x_{n} $, we focus on the polynomial system
\begin{equation}\label{eq:polsys}
\left\{
\begin{array}{l}
f_{1} = \lambda_{1}^{1}\ell_{1}^{a_{1}}+ \ldots +  \lambda_{k}^{1}\ell_{k}^{a_{1}} \\
\quad \, \, \,  \vdots \\
f_{r} = \lambda_{1}^{r}\ell_{1}^{a_{r}}+ \ldots +  \lambda_{k}^{r}\ell_{k}^{a_{r}}\\
\end{array}
\right.
\end{equation}
where $ f_{j} \in {\mathrm Sym}^{a_j} \C^{n+1} $ is a fixed general element, while $ \ell_{i} = x_{0}+ \sum_{h=1}^{n}l_{h}^{i}x_{h} \in \mathbb{P}(\C^{\vee}) $ and $ \lambda_{i}^{j} \in \C $ are unknown. By expanding the expressions on the right hand side of (\ref{eq:polsys}) and by applying the identity principle for polynomials, the $j$-th equation of (\ref{eq:polsys}) splits in $ {{a_{j}+n} \choose {n}} $ conditions. Our aim is to compute the number of solutions of $ F_{(f_{1}, \ldots, f_{r})}([l_{1}^{1}, \ldots, l_{n}^{1}, \lambda_{1}^{1}, \ldots, \lambda_{1}^{r}], \ldots \ldots, [l_{1}^{k}, \ldots, l_{n}^{k}, \lambda_{k}^{1}, \ldots, \lambda_{k}^{r}]) $, the square non linear system of order $ k(r+n)$, arising from the equivalent version of
(\ref{eq:polsys}) in which in each equation all the terms are on one side of the equal sign. In practice, to work with general $ f_{j} $'s, we assign random complex values $ \overline{l}_{h}^{i} $, $ \overline{\lambda}_{i}^{j} $ to $ l_{h}^{i} $, $ \lambda_{i}^{j} $ and, by means of $ F_{(f_{1}, \ldots, f_{r})} $, we compute the corresponding $ \overline{f}_{1}, \ldots, \overline{f}_{r} $, the coefficients of which are so called \emph{start parameters}. In this way, we know a solution $ ([\overline{l}_{1}^{1}, \ldots,\overline{l}_{n}^{1}, \overline{\lambda}_{1}^{1}, \ldots, \overline{\lambda}_{1}^{r}], \ldots \ldots, [\overline{\lambda}_{1}^{k}, \ldots,\overline{l}_{n}^{k}, \overline{\lambda}_{k}^{1}, \ldots, \overline{\lambda}_{k}^{r}]) \in \C^{k(r+n)}$ of $ F_{(\overline{f}_{1}, \ldots, \overline{f}_{r})} $, i.e. a Waring decomposition of $ \overline{f} = (\overline{f}_{1}, \ldots, \overline{f}_{r}) $, which is called a \emph{startpoint}. Then we consider $ F_{1} $ and $ F_{2} $, two square polynomial systems of order $ k(n+r) $ obtained from $ F_{(\overline{f}_{1}, \ldots, \overline{f}_{r})} $ by replacing the constant terms with random complex values. We therefore construct 3 segment homotopies
$$ H_{i} : \C^{k(r+n)} \times [0,1] \to \C^{k(r+n)} $$
for $ i \in \{0,1,2\} $: $H_{0}$ between $ F_{(\overline{f}_{1}, \ldots, \overline{f}_{r})} $ and $ F_{1} $, $ H_{1} $ between $ F_{1} $ and $ F_{2} $, $ H_{2} $ between $ F_{2} $ and $ F_{(\overline{f}_{1}, \ldots, \overline{f}_{r})} $. Through $ H_{0} $, we get a \emph{path} connecting the startpoint to a solution of $ F_{1} $, called \emph{endpoint}, which therefore becomes a startpoint for the second step given by $ H_{1} $, and so on. At the end of this loop, we check if the output is a Waring decomposition of $ \overline{f} $ different from the starting one. If this is not the case, this procedure suggests that the case under investigation is identifiable, otherwise we iterate this technique with these two \emph{startingpoints}, and so on. If at certain point, the number of solutions of $ F_{(\overline{f}_{1}, \ldots, \overline{f}_{r})} $ stabilizes, then, with high probability, we know the number of Waring decompositions of a generic polynomial vector in $ {\mathrm Sym}^{a_1} \C^{n+1} \oplus \ldots \oplus {\mathrm Sym}^{a_r} \C^{n+1} $. \\
We have implemented the homotopy continuation technique both in the software Bertini\cite{Be}, opportunely coordinated with Matlab, and in the software Macaulay2, with the aid of the package \emph{Numerical Algebraic Geometry}\cite{KL}. \\
\indent Before starting with this computational analysis, we need to check that the variety $ \mathbb{P}(\mathcal{O}_{\mathbb{P}^{n}}(a_{1}) \oplus \ldots \oplus \mathcal{O}_{\mathbb{P}^{n}}(a_{r})) $, introduced in \S\ref{sec:secant}, is not $ k $-defective, in which case (\ref{eq:polsys}) has no solutions. In order to do that, by using Macaulay2, we can construct a probabilistic algorithm based on Theorem \ref{th:terracini}, that computes the dimension of the span of the affine tangent spaces to $ \mathbb{P}(\mathcal{O}_{\mathbb{P}^{n}}(a_{1}) \oplus \ldots \oplus \mathcal{O}_{\mathbb{P}^{n}}(a_{r})) $ at $ k $ random points and then we can apply semicontinuity properties. \\
\indent In the following table we summarize the results we are able to obtain combining numerical and theoretical approaches. Our technique is as follows. We first apply the probabilistic algorithm, checking $ k $-defectivity, described above. If this suggests positive $k$-defect $ \delta_{k} $, we do not pursue the computational approach. When $ \delta_{k} $ is zero, we apply homotopy continuation technique. If the number of decompositions (up to order of summands) stabilizes to a number, $ \symbol{35}_{k} $, we indicate it. If homotopy technique does not stabilize to a fixed number, we apply degeneration techniques like in \S\ref{sec:ternary} to get a lower bound. If everything fails, we put a question mark. Bold degrees are the one obtained via theoretical arguments.\\

\begin{longtable}{|c|c|l|c|c|c|} 
% intestazione iniziale
% \toprule
\hline \multicolumn{1}{|c|}{$r$} & \multicolumn{1}{|c|}{$n$}& \multicolumn{1}{|l|}{ $(a_{1},\ldots,a_{r})$}& \multicolumn{1}{|c|}{$k$}&  \multicolumn{1}{|c|}{$\delta_k$}& \multicolumn{1}{|c|}{$\symbol{35}_{k}$}\\
\hline
% \midrule
\endhead
% {|c|c|l|c|c|c|} 
% \hline
\hline
\endfoot
$2$ &$2$ &$(4,5)$ & $ 9 $ &0 &$ 3 $ \\
$2$ &$2$ &$(6,6)$ & $ 14 $ &0 & $ \geq 2 $ \\
$2$ &$2$ &$(6,7)$ & $ 16 $ &0& $ \geq 8 $\\
$2$ &$3$ &$(2,4)$ & $ 9$ &$2$&  \\
$3$ &$2$ &$(2,2,6)$ & $ 8 $ & $4$&  \\
$3$ &$2$ &$(3,3,4)$ & $ 7 $ & 0&${\bf 1} $ \\
$3$ &$2$ & $(3,4,4)$ & $ 8 $ & 0&$ 4 $ \\
$3$ &$2$ & $(5,5,6)$ & $ 14 $ & 0&$ 205$ \\
$3$ &$3$ & $(3,3,3)$ & $ 10 $ & 0&$ 56 $ \\
$4$ &$2$ & $(2,2,4,4)$ & $ 7 $ &  $2$&  \\
$4$ &$2$ & $(2,3,3,3)$ & $ 6 $ & 0&$ 2$ \\
$4$ &$2$ & $(4,\ldots,4)$ & $ 10 $ &0 & $ ?$ \\
$5$ &$2$ & $(5,\ldots,5,6)$ & $ 16 $ &0 &  $ ? $\\
$6$ &$2$ & $(2,\ldots,2,3)$& $ 5 $ & $3$ &  \\
$6$ &$4$ & $(2.\ldots,2)$ & $ 9 $ & 0&$ 45 $ \\
$7$&$3$&$(2,\ldots,2)$&$7$&0&$\bf 8$\\
$8$&$2$&$(3,\ldots,3)$&$8$&$0$&$\bf 9$\\
$8$&$2$&$(2,\ldots,2,6)$&$7$&$ 7$& \\
$11$&$4$&$(2,\ldots,2)$&$11$&$0$& ${\bf 4368}$\\
$13$&$2$&$(4,\ldots,4)$&$13$&$0$& ${\bf 560}$\\
$15$&$2$&$(4,\ldots,4,6)$&$14$& $6$& \\
$17$&$3$&$(3,\ldots,3)$&$17$& $0$& $ {\bf 8436285}$\\
$19$&$2$&$(5,\ldots,5)$&$19$& $0$& ${\bf 177100}$\\
$26$&$2$&$(6,\ldots,6)$&$26$& $0$& ${\bf 254186856}$ \\
\end{longtable}

\section{Identifiability of pairs of ternary forms}\label{sec:ternary}

In this section we aim to study the identifiability of pairs of ternary forms.
In particular we study the special case of two forms of degree $a$ and $a+1$.
 Our main result is the following
\begin{thm}
  \label{th:main_identifi} Let $a$ be an integer then a general pair of ternary forms of degree $a$ and $a+1$ is identifiable if and only if $a=2$. Moreover there are finitely many decompositions if and only if $a=2t$ is even, and for such an $a$ the number of decompositions is at least
$$ \frac{(3t-2)(t-1)}2+1. $$
\end{thm}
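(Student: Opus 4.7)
\emph{Easy parts first.} When $a$ is odd, the perfectness condition (\ref{eq:perfect}) fails since $4 \nmid (a+2)^2$: the number of parameters in (\ref{eq:dec}) strictly exceeds the number of equations, so a routine dimension count yields infinitely many Waring decompositions for the generic $f$. When $a=2$, identifiability is Roberts' theorem, item (v) in the list (\ref{eq:classiclist}). The remaining content is therefore to prove non-identifiability, together with the explicit lower bound $\frac{(3t-2)(t-1)}{2}+1$, in the range $a=2t \geq 4$.

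\emph{Main strategy via tangential projection.} The plan is to follow Mella's strategy from \cite{Me}, adapted to the projective bundle $X = \p\bigl(\o_{\p^2}(a)\oplus\o_{\p^2}(a+1)\bigr) \subset \p^{N-1}$ with $N = (a+2)^2 = 4k$, $k = (t+1)^2$. By Theorem \ref{th:birational_tangent_proj}, the number of decompositions of a generic $f$ equals the degree of the tangential projection
\[
\tau \colon X \dashrightarrow \p^3
\]
obtained by projecting from the linear span $T$ of the embedded tangent spaces to $X$ at $k-1 = t(t+2)$ general points $x_1,\ldots,x_{k-1}$. The theorem then reduces to the inequality $\deg\tau \geq \frac{(3t-2)(t-1)}{2}+1$ for $t \geq 2$, the matching equality $\deg\tau=1$ at $t=1$ being Roberts.

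\emph{Degeneration to an intersection on $\p^2$.} I would next invoke Lemma \ref{lem:birational_degeneration} to deform $\{x_1,\ldots,x_{k-1}\}$ to a special but still sufficiently generic configuration, in which their images $p_i = \pi(x_i) \in \p^2$ lie in chosen position. Because the tangent space of $X$ at $x_i$ decomposes into the fiber $\pi^{-1}(p_i) \cong \p^1$ and a lift of $T_{p_i}\p^2$, the linear span $T$, read via the embedding by $\o_X(1)$, imposes on $H^0(\o_{\p^2}(a))$ and $H^0(\o_{\p^2}(a+1))$ conditions equivalent to vanishing to order $2$ at each $p_i$. The generic fiber of the specialized $\tau$ is thus parametrized by a plane linear system of curves with prescribed double points at the $p_i$, whose dimension, via a standard Riemann--Roch and intersection count on $\p^2$, yields the announced lower bound.

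\emph{Main obstacle.} The technically hardest step will be the degeneration lemma itself, which must guarantee both that the degree of the specialized tangential projection is a genuine lower bound for the generic one (so that no excess component absorbed in the limit hides extra decompositions) and that the two bundle summands $\o(a)$ and $\o(a+1)$ combine, after accounting for the fiber-direction constraint, into the correct single linear system of plane curves. Once this translation to $\p^2$ is rigorously in place, the dimension count producing the additional $\frac{(3t-2)(t-1)}{2}$ decompositions is formal, and the "finitely many iff $a$ is even" statement follows: for $a=2t$ one obtains a positive but finite $\deg\tau$, while for $a$ odd the failure of (\ref{eq:perfect}) gives infinity. Disentangling the fiber and base contributions of $X$ is the point where genuine work beyond the Veronese analogue of \cite{Me} is required.
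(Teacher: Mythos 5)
Your overall architecture is the paper's: odd $a$ fails perfectness, $a=2$ is Roberts, and for $a=2t\ge 4$ one bounds the number of decompositions below by the degree of the tangential projection from $k-1$ general tangent spaces (Theorem \ref{th:birational_tangent_proj}, which incidentally gives only an inequality, not the equality you state --- but an inequality in the direction you need), followed by a degeneration of the points as in Lemma \ref{lem:birational_degeneration}. The easy parts are fine.

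The genuine gap is in the translation of the tangential projection into a computable linear system. You propose to read the tangency conditions on the base $\p^2$ of the bundle, claiming that the span of the tangent spaces imposes on $H^0(\o_{\p^2}(a))$ and $H^0(\o_{\p^2}(a+1))$ separately the condition of vanishing to order $2$ at each $p_i=\pi(x_i)$. This overcounts: double points on both summands would be $6$ conditions per point, whereas the embedded tangent space at a point of the threefold $X$ spans only a $\p^3$ and imposes $4$ conditions on $H^0(\o_X(1))$; the conditions on the two summands are coupled through the fiber coordinate of $x_i$ and cannot be separated this way. The paper's key move, which is absent from your outline, is the identification of $X=\p(\o_{\p^2}(a)\oplus\o_{\p^2}(a+1))$ with the blow-up of $\p^3$ at a point $q$, embedded by the monoid system $|\I_{q^a}(a+1)|\subset|\o_{\p^3}(a+1)|$; the tangential projection then becomes the concrete system $|\I_{q^a\cup p_1^2\cup\cdots\cup p_{k-1}^2}(a+1)|$ on $\p^3$, and the degeneration puts $b=\tfrac12 t(t+3)$ of the points on a plane $H\subset\p^3$ and $c=\tfrac12 t(t+1)$ off it. Finally, the quantity $\tfrac{(3t-2)(t-1)}2=(2t+1)^2-4b-c$ is a degree, computed as a self-intersection on the blow-up of $H$ after a careful base-locus analysis (and the extra $+1$ comes from the residual component), not a ``formal dimension count''; your last step conflates the dimension of a linear system with the degree of the map it induces. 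You correctly flag that disentangling fiber and base is the hard point, but the route you sketch for it does not work as stated.
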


The Theorem has two directions on one hand we need to prove that $a=2$ is identifiable, on the other we need to show that for $a>2$ a general pair is never identifiable. The former is a classical result we already recalled in (iii) of (\ref{eq:classiclist}) and
in Theorem \ref{thm:nonabelian_applied2}. For the latter observe that $\dim\sec_k(X)=4k-1$, therefore if either $4k-1<N$ or $4k-1>N$ the general pair is never identifiable. We are left to consider the perfect case $N=4k-1$.
Under this assumption we may assume that $X$ is not $k$-defective, we
will prove that this is always the case in
Remark~\ref{rem:non_defective}, otherwise the non identifiability is immediate. Hence the core of the question is to study generically finite maps
$$\pi_k:\sec_k(X)\to\p^N,$$
with $4k=(a+2)^2$. This yields our last numerical constraint that is
$a=2t$ needs to be even.

The first step is borrowed from, \cite{Me}\cite{Me1}, and it is
slight generalization of \cite[Theorem 2.1]{Me}, see also \cite{CR}.

\begin{thm}\label{th:birational_tangent_proj} Let $X\subset\p^N$ be an irreducible variety of dimension $n$. 
Assume that the natural map $\sigma:\sec_{k}(X)\to\p^N$ is dominant and generically finite of degree $d$. Let $z\in\Sec_{k-1}(X)$ be a general point.  
Consider $\f:\p^N\dasharrow\p^n$ the projection from the embedded tangent
space $\T_z\Sec_{k-1}(X)$. Then
$\f_{|X}:X\dasharrow\p^n$ is dominant and generically finite of degree at most $d$.
\label{th:proj}
\end{thm}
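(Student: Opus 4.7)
The plan is to bound the degree of $\f|_X$ by injecting its generic fiber into the generic fiber of $\sigma$.

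First, apply Terracini's Lemma (Theorem \ref{th:terracini}) to identify $L := \T_z \Sec_{k-1}(X) = \langle \T_{p_1}X, \ldots, \T_{p_{k-1}}X\rangle$, where $z$ has a general decomposition $z = \sum_{i=1}^{k-1}\lambda_i p_i$ with $p_i \in X$. The hypothesis that $\sigma$ is dominant and generically finite forces $N = k(n+1) - 1$, and combined with non-defectivity of $\Sec_{k-1}(X)$ (which is implicit here) we obtain $\dim L = (k-1)(n+1) - 1$. Hence $L$ has codimension $n+1$ in $\p^N$, so $\f$ indeed targets $\p^n$. A fiber $\f^{-1}(q) = \langle L, \tilde q\rangle$ is a linear subspace of dimension $(k-1)(n+1)$, and a dimension count gives $\dim(X \cap \f^{-1}(q)) = 0$ generically; since $X$ is non-degenerate we have $X \not\subset L$. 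Therefore $\f|_X$ is dominant and generically finite.

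For the degree bound, I choose a general $p_0 \in X$ and general $\tau \in \C$, and set $w_0 := z + \tau p_0$. The addition map $\Sec_{k-1}(X) \times X \times \C \to \p^N$, $(z, p, \tau) \mapsto z + \tau p$, is dominant between equidimensional varieties, so for generic $(p_0, \tau)$ the point $w_0$ lies in the \'etale locus of $\sigma$, and $\sigma^{-1}(w_0)$ has exactly $d$ elements, one of which is $\Lambda_{p_0} := \langle p_1, \ldots, p_{k-1}, p_0\rangle$. Since $z \in L$, we have $\f(w_0) = \f(\tau p_0) = \f(p_0)$, so $\f|_X^{-1}(\f(p_0)) = X \cap \langle L, p_0\rangle$. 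The strategy is to build an injection $\iota : X \cap \langle L, p_0\rangle \hookrightarrow \sigma^{-1}(w_0)$ by sending each $p'$ to a $k$-secant $\Lambda_{p'}$ through $w_0$ containing $p'$. Injectivity is automatic, since a $k$-secant is determined by its $X$-points while $w_0 \notin X$ for general parameters; hence $|\f|_X^{-1}(\f(p_0))| \leq d$.

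The main obstacle is the construction of $\Lambda_{p'}$ for $p' \in X \cap \langle L, p_0\rangle$. Writing $p' = \ell' + \mu' p_0$ with $\ell' \in L$ and $\mu' \in \C^*$, a formal manipulation gives $w_0 = (z - \tau\ell'/\mu') + (\tau/\mu') p'$, so we must replace $z - \tau\ell'/\mu'$, which lies only in the tangent space $L$, by a genuine point $z' \in \Sec_{k-1}(X)$. The key technical step is a deformation argument: since $\ell'/\mu' \in L = \T_z \Sec_{k-1}(X)$ and $\Sec_{k-1}(X)$ is smooth at the generic point $z$, this tangent direction integrates to an algebraic curve in $\Sec_{k-1}(X)$ through $z$, and together with Remark~\ref{re:vuoto} (distinct abstract secants have disjoint lifts in $\sec_{k-1}$) one extracts an honest $z' \in \Sec_{k-1}(X)$ and a scalar $\nu$ producing $w_0 = z' + \nu p'$. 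This passage from first-order tangent data to an actual $k$-secant decomposition of $w_0$ is where the specific structure of the secant variety near its general point enters and constitutes the heart of the proof.
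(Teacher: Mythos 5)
There is a genuine gap, and it sits exactly where you locate ``the heart of the proof.'' For $p'\in X\cap\langle L,p_0\rangle$ you need a point $z'\in\Sec_{k-1}(X)$ on the line $\langle w_0,p'\rangle$, but no such point exists in general, and ``integrating the tangent direction'' cannot produce one. The line $\langle w_0,p'\rangle$ lies inside $\langle L,p_0\rangle$, which has dimension $(k-1)(n+1)$, while $\Sec_{k-1}(X)\cap\langle L,p_0\rangle$ has dimension about $(k-2)(n+1)$; the expected dimension of the intersection of this locus with a line in $\langle L,p_0\rangle$ is $1-(n+1)<0$, so a general such line meets $\Sec_{k-1}(X)$ nowhere. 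Knowing that $\Sec_{k-1}(X)$ is smooth at $z$ with tangent space $L$ only tells you the variety osculates $L$ to first order at $z$; it does not place an actual point of $\Sec_{k-1}(X)$ at the prescribed position $z-(\tau/\mu')\ell'$ on your specific line. If the construction worked, every point of $X\cap\langle L,p_0\rangle$ would give a genuine $k$-secant plane through the honest general point $w_0$, and no blow-up would ever be needed --- which is a sign that something is being assumed for free. (A secondary soft spot: injectivity of $p'\mapsto\Lambda_{p'}$ is not ``automatic''; one needs the Trisecant Lemma to see that distinct $p'$ give distinct planes, and Remark~\ref{re:vuoto} to separate them in the abstract secant variety.)

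The paper's proof circumvents precisely this obstacle by never leaving $z$. For each point $x_i$ of the fiber of $\f_{|X}$ it takes the $k$-secant plane $\Lambda_i=\langle x_i,p_1,\ldots,p_{k-1}\rangle$, which genuinely passes through $z$ (no deformation of $z$ into $\Sec_{k-1}(X)$ is required, since $z$ already lies on $\langle p_1,\ldots,p_{k-1}\rangle$). Of course all the $\Lambda_i$ then map to the non-general point $z$, so one cannot compare directly with the degree of $\sigma$; instead one blows up $\Sec_{k-1}(X)$, observes that all the strict transforms $\Lambda_i^Y$ pass through the same \emph{general} point $y$ of the exceptional fiber over $z$ (the point recording the direction $\Pi\supset L$), uses Remark~\ref{re:vuoto} to see that their lifts to $\sec_k(X)\times_{\p^N}Y$ are disjoint, and concludes because $y$ is a general point of a divisor in a normal variety, over which the fiber of the induced generically finite map still has at most $d$ points. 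Your intuition of taking $w_0=z+\tau p_0$ and letting the decompositions degenerate is morally the same picture --- the blow-up is exactly the bookkeeping device that makes the limit $\tau\to 0$ rigorous --- but as written the step producing $z'$ is false, so the injection into $\sigma^{-1}(w_0)$ is not constructed.
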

\begin{proof}
  Choose a general point $z$ on a general $(k-1)$-secant linear space spanned by $\langle
  p_1,\ldots,p_{k-1}\rangle$.
Let $f:Y\to \p^N$ be the blow up of $\sec_{k-1}(X)$ with exceptional
divisor $E$, and fiber  $F_z=f^{-1}(z)$. 
Let $y\in F_z$ be a general point. This point uniquely determines a
linear space $\Pi$  of dimension $(k-1)(n+1)$ that
contains $\T_z\sec_{k-1}(X)$. 
Then the projection $\f_{|X}:X\dasharrow \p^n$ is generically finite of degree $d$ if and only if
$(\Pi\setminus \T_z\sec_{k-1}(X))\cap X$ consists
of just $d$   points.

Assume that $\{x_1,\ldots, x_a\}\subset (\Pi\setminus
\T_z\sec_{k-1}(X))\cap X$.
By Terracini Lemma, Theorem \ref{th:terracini},
$$\T_z\sec_{k-1}(X)= \langle \T_{p_1}X,\ldots, \T_{p_{k-1}}X\rangle$$
Consider the linear spaces $\Lambda_i=\langle x_i, p_1,\ldots,p_{k-1}\rangle$. The Trisecant
Lemma, see for instance \cite[Proposition 2.6]{ChCi}, yields
$\Lambda_i\neq\Lambda_j$, for $i\neq j$. Let
$\Lambda_i^Y$,  and $\Pi^Y$ be the strict transforms on $Y$. 
Since $z\in\langle p_1,\ldots,p_{k-1}\rangle$ and $y=\Pi^Y\cap F_z$ then  $\Lambda_i^Y$ contains the point $y\in F_z$. In particular we have
$$\Lambda_i^Y\cap\Lambda_j^Y\neq\emptyset$$ 

Let $\pi_1:\Sec_k(X)\to\p^N$ be the morphism from the abstract secant variety,
and  $\mu:U\to Y$ the induced morphism. That is
$U=\Sec_k(X)\times_{\p^N} Y$. Then 
there exists a commutative diagram
$$\diagram
U\dto_{p}\rto^{\mu}&Y\dto^{f}\\
\Sec_k(X)\rto^{\pi_1}&\p^N\enddiagram$$

Let $\lambda_i$ and $\Lambda^U_i$ be the strict transform of $\Lambda_i$ in $\Sec_k(X)$ and $U$ 
respectively. By Remark \ref{re:vuoto}
 $\lambda_i\cap \lambda_j=\emptyset$, so that
$$\Lambda^U_i\cap \Lambda^U_j=\emptyset.$$

This proves that $\sharp{\mu^{-1}(y)}\geq a$. But $y$ is a general point of a divisor in the normal variety $Y$. Therefore $\deg\mu$, and henceforth $\deg\pi_1$, is at least $a$.
\end{proof}
To apply Theorem~\ref{th:birational_tangent_proj} we need to better understand $X$ and its tangential projections. 
By definition we have
$$X\simeq\p((\o_{\p^2}(-1)\oplus\o_{\p^2})\otimes\pi^*\o_{\p^2}(a+1))$$
then $X\subset\p^N$ can be seen as the embedding of $\p^3$ blown up in one point $q$ embedded by monoids of degree $a+1$ with vertex $p$. That is let $\L=|\I_{q^a}(a+1)|\subset|\o_{\p^3}(a+1)|$, and $Y=Bl_q\p^3$ then
$$X=\f_\L(Y)\subset\p^N.$$

It is now easy, via Terracini Lemma, to realize that the restriction
of the tangential projection $\f_{|X}X\dasharrow\p^3$ is given by the linear system
$$\sH=|\I_{q^a\cup p_1^2\ldots\cup p_{k-1}^2}(a+1)|\subset|\o_{\p^3}(a+1)|.$$
We already assumed  that $X$ is not $k$-defective  that is we work under the condition

\noindent$(\dag)$\hspace{5cm} $\dim\sH=3.$
\begin{rem}
  It is interesting to note that for $a=2$ the map $\f_{|X}$ is the
  standard Cremona transformation of $\p^3$ given by
  $(x_0,\ldots,x_3)\mapsto (1/x_0,\ldots,1/x_3)$.
\end{rem}

Let us work out a preliminary Lemma, that we reproduce by the lack of an adequate reference.
\begin{lem}
  \label{lem:birational_degeneration}  Let $\Delta$ be a complex disk around
  the origin, $X$ a variety and $\o_X(1)$ a base point free line bundle. 
Consider the product $V=X\times \Delta$,
 with the natural projections, $\pi_1$ and $\pi_2$. Let $V_t=X\times\{t\}$ and
 $\o_V(d)=\pi_1^*(\o_{X}(d))$.
Fix a configuration $p_1,\ldots,p_l$ of $l$ points on $V_0$ and let
 $\sigma_i:\Delta\to V$ be sections such that $\sigma_i(0)=p_i$ and
 $\{\sigma_i(t)\}_{i=1,\ldots,l}$ are general points of $V_t$ for
$t\neq 0$. Let $P=\cup_{i=1}^l\sigma_i(\Delta)$, and $P_t=P\cap V_t$.

 Consider the linear system $\sH=|\o_{V}(d)\otimes\I_{P^2}|$ on $V$, with  $\sH_t:=\sH_{|V_t}$. 
Assume that $\dim\sH_0=\dim\sH_t=\dim X$, for $t\in\Delta$. Let
$d(t)$ be the degree of the map induced by $\sH_t$. Then $d(0)\leq d(t)$.
\end{lem}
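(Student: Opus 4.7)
My plan is to realize the degrees $d(t)$ as degrees of a single family of generically finite maps over $\Delta$, and then to read off the inequality from conservation of degree under flat specialization.

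First, the linear system $\sH$ defines a rational map $\Phi\colon V\map \p^{\dim X}\times\Delta$ over $\Delta$. The hypothesis $\dim\sH_0=\dim\sH_t=\dim X$, together with a standard base-change/semicontinuity argument, ensures that the restriction of $\Phi$ to each fiber $V_t$ is exactly the map of degree $d(t)$ defined by $\sH_t$. Let $\bar\Gamma\subset V\times_\Delta(\p^{\dim X}\times\Delta)$ denote the closure of the graph of $\Phi$. Then $\bar\Gamma$ is irreducible, as the closure of the graph of a rational map from an irreducible variety, and $\bar\Gamma\to\Delta$ is dominant with irreducible source over the smooth curve $\Delta$, hence flat. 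Consequently every fiber $\bar\Gamma_t$ has pure dimension $\dim X$. For $t\neq 0$, $\bar\Gamma_t$ is the closure of the graph of $\phi_t$ and projects to $\p^{\dim X}\times\{t\}$ with degree $d(t)$. At $t=0$, $\bar\Gamma_0$ contains the closure $\Gamma_0$ of the graph of $\phi_0$ as a top-dimensional irreducible component, possibly together with other top-dimensional components.

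The second step is a cycle computation. Let $\pi\colon\bar\Gamma\to\p^{\dim X}\times\Delta$ be the second projection; it is dominant and generically finite of degree $d(t)$, so $\pi_*[\bar\Gamma]=d(t)\,[\p^{\dim X}\times\Delta]$ in the Chow group. Intersecting with the Cartier divisor $\{t=0\}$, which meets $\bar\Gamma$ properly, and pushing forward yields
$$(\pi_0)_*[\bar\Gamma_0]=d(t)\,[\p^{\dim X}\times\{0\}],$$
where $\pi_0=\pi|_{\bar\Gamma_0}$. Writing $[\bar\Gamma_0]=[\Gamma_0]+\sum_i a_i[Z_i]$ as a sum of top-dimensional components with $a_i\ge 1$, one has $(\pi_0)_*[\Gamma_0]=d(0)\,[\p^{\dim X}\times\{0\}]$, while each $(\pi_0)_*[Z_i]$ is a non-negative multiple of the same class (zero if $Z_i$ does not dominate $\p^{\dim X}\times\{0\}$, and $\deg(Z_i/\p^{\dim X})\,[\p^{\dim X}\times\{0\}]$ otherwise). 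Comparing coefficients of $[\p^{\dim X}\times\{0\}]$ gives $d(0)\le d(t)$.

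The main obstacle I anticipate is verifying that $\Gamma_0$ is indeed a top-dimensional component of $\bar\Gamma_0$, equivalently that $\phi_0$ is defined at a generic point of $V_0$. This is precisely what the hypothesis $\dim\sH_0=\dim\sH_t$ is designed to ensure, by keeping the pushforward of the relevant line bundle locally free over $\Delta$ near $t=0$ and preventing the indeterminacy locus of $\Phi$ from containing an entire fiber. Once this point is settled, the inequality is a routine application of conservation of intersection numbers in flat families over a smooth curve.
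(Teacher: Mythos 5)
Your argument is correct and rests on the same principle as the paper's proof: conservation of number in a flat family over $\Delta$, where the special fibre decomposes as the expected component plus extra components contributing non-negatively to the degree. The paper packages this by resolving the base locus of $\sH$ on the total space and computing $d(t)=\sH_Z^{\dim X}\cdot(\mu^{-1}_*V_{0}+R)\geq\sH_Z^{\dim X}\cdot\mu^{-1}_*V_{0}=d(0)$, whereas you take the closure of the graph of $\Phi$ and push cycles forward to $\p^{\dim X}\times\Delta$; in both versions the hypothesis $\dim\sH_0=\dim\sH_t$ plays exactly the role you identify, namely it makes $\sH_0$ the flat limit of $\sH_t$, so that the special fibre genuinely contains the graph (respectively the strict transform) of the map defined by $\sH_0$.
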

\begin{proof}
If, for $t\neq 0$,  $\f_{\sH_t}$ is not dominant the claim is clear. Assume that $\f_{\sH_t}$ is dominant for $t\neq 0$. Then $\f_{\sH_t}$ is generically finite and  $\deg\f_{\sH_t}(X)=1$, for $t\neq 0$. 
Let $\mu:Z\times\Delta\to V$ be a resolution of the base locus, $V_{Zt}=\mu^*V_t$, and  $\sH_{Z}=\mu^{-1}_*\sH$ the strict transform linear systems on $Z$. 
Then $V_{Zt}$ is a blow up of $V_t=X$, for $t\neq 0$, and $V_{Z0}=\mu^{-1}_*V_{0}+R$, for some effective, eventually trivial, residual divisor $R$.
By hypothesis $\sH_0$ is the flat limit of $\sH_t$,  for $t\neq 0$. 
Hence flatness forces 
$$d(t)=\sH_Z^{\dim X}\cdot V_{Zt}=\sH_Z^{\dim X}\cdot(\mu^{-1}_*V_{0}+R)\geq \sH_Z^{\dim X}\cdot \mu^{-1}_*V_{0}=d(0).$$
\end{proof}

Lemma~\ref{lem:birational_degeneration} allows us to work on a degenerate configution to study the degree of the map induced by $|\I_{q^a\cup p_1^2\ldots\cup p_{k-1}^2}(a+1)|\subset|\o_{\p^3}(a+1)|\subset|\o_\p^3(a+1)|$.

\begin{lem}
  \label{lem:degeneration_ok} Let $H\subset \p^3\setminus\{q\}$ be a plane, $B:=\{p_1,\ldots,p_b\}\subset H$ a set of $b:=1/2t(t+3)$ general points, and $C:=\{x_1,\ldots,x_c\}\subset\p^3\setminus\{q\cup H\}$ a set of 
$c:=1/2t(t+1)$ general points. Let $a=2t$ and
$$\sH:=|\I_{q^a\cup C^2\cup B^2}(a+1)|\subset|\o_{\p^3}(a+1)|,$$
be the linear system of monoids with vertex $q$ and double points along $B\cup C$, and $\f_\sH$ the associated map. 
Then $\dim \sH=3$ and 
$$\deg\f_{\sH}> \frac{(3t-2)(t-1)}2.$$
\end{lem}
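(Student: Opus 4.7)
The plan is to restrict $\sH$ to the plane $H$ and analyze the resulting linear system; this will cleanly yield both the dimension statement and the degree bound.

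First I will study the restriction map $\rho\colon \sH\to|\o_H(2t+1)|$. Its kernel consists of monoids of the form $F=HG$ with $G$ of degree $2t$; translating the conditions defining $\sH$ (and using that $H$ avoids $q\cup C$), $G$ must be a cone with vertex $q$ passing doubly through $C$ and simply through $B$. Projection from $q$ identifies such cones with curves of degree $2t$ on $\p^2\cong H$, so $\ker\rho\cong|\I_{\bar C^2\cup B}(2t)|$ where $\bar C$ is the projection of $C$ from $q$. A dimension count gives expected vector dimension $(t+1)(2t+1)-3c-b=1$; for generic $B,C$ this is realized (no Alexander--Hirschowitz-type defect for the relevant mixed configuration on $\p^2$). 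Hence up to scalar there is a unique $F_0=HG_0\in\sH$ containing $H$. For the image, observe that each monoid contains the line $\ell_j=\overline{qx_j}$, since the sum of its multiplicity at $q$ and twice its multiplicity at $x_j$ equals $2t+2>2t+1$; this line cuts $H$ at $\bar x_j$, so $\rho(\sH)\subseteq|\I_{B^2\cup\bar C}(2t+1)|$. The latter has expected vector dimension $(t+1)(2t+3)-3b-c=3$, and since $\dim_{\mathrm{vect}}\sH=4$ by the standard count while $\dim_{\mathrm{vect}}\sH=\dim\ker\rho+\dim_{\mathrm{vect}}\rho(\sH)$, the containment is an equality. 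This proves $\dim\sH=3$ and identifies $\rho(\sH)=|\I_{B^2\cup\bar C}(2t+1)|$.

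Now pick a basis $(F_0,F_1,F_2,F_3)$ of $\sH$ with $F_0=HG_0$. Then $\f_\sH|_H$ takes values in the hyperplane $\{y_0=0\}\simeq\p^2$ and is given by $|\I_{B^2\cup\bar C}(2t+1)|$; for generic $B,\bar C$ this net has no base locus beyond the prescribed scheme, so by a direct intersection computation
\[
\deg(\f_\sH|_H)=(2t+1)^2-4b-c=\frac{(3t-2)(t-1)}{2}.
\]
For a general $y\in\{y_0=0\}$, every preimage under $\f_\sH$ must satisfy $F_0(P)=H(P)G_0(P)=0$, so lies in $V(F_0)=H\cup V(G_0)$; the part in $H$ contributes exactly $\deg(\f_\sH|_H)$ preimages.

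Finally, to upgrade to the strict inequality I will show that $V(G_0)$ adds at least one further preimage. Since $\f_\sH$ is generically finite (three general elements of $\sH$ meet in finitely many points off the known base locus, a fact readable off the dimension counts above), $\f_\sH$ contracts no divisor. The image $\f_\sH(V(G_0))$ is thus $2$-dimensional, lies in the irreducible surface $\{y_0=0\}$, and so must equal it. Hence $\f_\sH|_{V(G_0)}$ is dominant of degree at least $1$, contributing one more preimage of $y$ outside $H$. Summing the contributions,
\[
\deg\f_\sH\geq\deg(\f_\sH|_H)+1=\frac{(3t-2)(t-1)}{2}+1>\frac{(3t-2)(t-1)}{2}.
\]
The main obstacles are the genericity inputs: that the auxiliary systems $|\I_{\bar C^2\cup B}(2t)|$ and $|\I_{B^2\cup\bar C}(2t+1)|$ on $\p^2$ have the expected dimension for the mixed general simple/double configurations, that the latter has no residual base locus, and that $\f_\sH$ is generically finite on this degenerate configuration. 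Each is amenable to a direct verification for small $t$ combined with semicontinuity, or to standard Alexander--Hirschowitz-type reasoning on $\p^2$.
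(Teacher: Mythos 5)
Your overall strategy coincides with the paper's: identify the unique member $F_0=HG_0$ of $\sH$ containing $H$ via the kernel of the restriction to $H$, get $\dim\sH=3$ from the two Alexander--Hirschowitz counts, compute $(2t+1)^2-4b-c$ for the restricted net on $H$, and then squeeze out the strict inequality from the residual cone $V(G_0)$. However, two steps as written do not hold up. The more serious one is your justification of the last step: ``Since $\f_\sH$ is generically finite\dots\ $\f_\sH$ contracts no divisor.'' This implication is false: generically finite (even birational) rational maps routinely contract divisors --- the standard Cremona transformation of $\p^3$, which is exactly the map $\f_\sH$ in the case $t=1$ of this very lemma, contracts the four coordinate planes to points. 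So you cannot conclude that $\f_\sH(V(G_0))$ is two-dimensional for free; you must actually prove that the restriction of $\sH$ to the cone $V(G_0)$ induces a generically finite map, which is what the paper does (tersely) by rerunning its base-locus claims on $R=V(G_0)$. Without this, the weak inequality $\deg\f_\sH\geq\frac{(3t-2)(t-1)}{2}$ is all you get, and the ``$+1$'' is precisely what Theorem \ref{th:main_identifi} needs.

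The second gap is your assertion that $|\I_{B^2\cup\bar C}(2t+1)|$ ``has no base locus beyond the prescribed scheme,'' which you defer to ``Alexander--Hirschowitz-type reasoning.'' Alexander--Hirschowitz controls the \emph{dimension} of such systems, not their base loci, and the degree formula $(2t+1)^2-4b-c$ is exactly the self-intersection computation that fails if there are unassigned base points or if the assigned double points are not resolved by a single blow-up. The paper closes this with a separate argument: it exhibits explicit reducible members $\Gamma_i+\Gamma_j+\Span{p_i,p_j}$ built from the auxiliary systems $\sL_{ij}=|\I_{B\setminus\{p_i,p_j\}}(t)|$ and invokes the Trisecant Lemma to show these members have no common basepoints outside $B^2$, and then that the simple points of $\bar C$ impose independent, base-point-free conditions. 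You correctly flag both issues as ``obstacles'' at the end, but they are not routine semicontinuity checks for general $t$; each requires the kind of explicit construction the paper supplies.
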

\begin{proof} Note that by construction the lines $\Span{q,p_i}$ and $\Span{q,x_i}$ are contained in the base locus of $\sH$.
  Let us start computing $\dim\sH$. First we prove that there is a unique element in $\sH$ containing the plane $H$. 
  \begin{claim}\label{cl:1}
    $|\sH-H|=0$.
  \end{claim}
  \begin{proof}[Proof of the Claim]
Let $D\in\sH$ be such that $D=H+R$ for a residual divisor in $|\o(a)|$. Then $R$ is a cone with vertex $q$ over a plane curve $\Gamma\subset H$. Moreover $R$ is singular along $C$ and has to contain $B$. This forces $\Gamma$ to contain $B$ and to be singular at $\Span{q,x_1}\cap H$. In other words $\Gamma$ is a plane curve of degree $2t$ with $c= 1/2t(t+1)$ general double points and passing thorugh $b= 1/2t(t+3)$ general points. Note that 
$$\binom{2t+2}2-3c-b=1.$$
It is well known, see for instance \cite{AH}, that the $c$ points impose independent conditions on plane curves of degree $2t$. Clearly the latter $b$ simple points do the same therefore there is a unique plane curve $\Gamma$ satisfying the requirements. This shows that $R$ is unique and in conclusion the claim is proved.
  \end{proof}

We are ready to compute the dimension of $\sH$
\begin{claim}\label{cl:2}
  $\dim\sH=3$
\end{claim}
\begin{proof}[Proof of the Claim] 
The expected dimension of $\sH$ is 3. Then by  Claim~\ref{cl:1} it is enough to show that $\dim\sH_{|H}=2$. To do this observe that
$\sH_{|H}$ is a linear system of plane curves of degree $2t+1$ with $b$ general double points and $c$ simple general points. As in the proof of Claim~\ref{cl:1} we compute the expected dimension
$$\binom{2t+3}2-3b-c=3,$$
and conclude by \cite{AH}.
\end{proof}

Next we want to determine the base locus scheme of $\sH_{|H}$.
Let $\epsilon:S\to H$ be the blow up of $B$ and $\Span{q,x_i}\cap H$, with $\sH_S$ strict transform linear system.
We will first prove the following.

\begin{claim}
 The scheme base locus of  $|\I_{B^2}(2t+1)|\subset|\o_{\p^2}(2t+1)|$ is $B^2$.
\end{claim}
\begin{proof}
Let $\sL_{ij}:=|\I_{B\setminus\{p_i,p_j\}}(t)|\subset|\o_{\p^2}(t)|$, then  
$$\dim \sL_{ij}=\binom{t+2}2-b-2-1=2.$$
By the Trisecant Lemma, see for instance \cite[Proposition 2.6]{ChCi}, we conclude that 
$$\Bs\sL_{ij}=B\setminus\{p_i,p_j\}.$$
Let $\Gamma_i, \Gamma_j\in\sL_{ij}$ be such that $\Gamma_i\ni p_i$ and $\Gamma_j\ni p_j$.  Then by construction we have
$$D_{ij}:=\Gamma_i+\Gamma_j+\Span{p_i,p_j}\in\sH.$$
Let $D_{ijS}$, $\sL_{ijS}$ be the strict transforms on $S$.
 Note that $\Gamma_h$ belongs to a pencil of curves in $\sL_{hk}$ for any $k$. These pencils do not have common base locus outside of $B$ since $\sL_{ijS}$ is base point free and $\dim \sL_{ij}=2$. Therefore the $D_{ijS}$ have no common base locus.
\end{proof}

\begin{claim}
  $\sH_{S}$ is base point free.
\end{claim}
\begin{proof} To prove the Claim it is enough to prove that the simple base points associated to $C$ impose independent conditions. Since $C\subset\p^3$ is general this is again implied by  the Trisecant Lemma.
\end{proof}

 Then we have
$$\deg\f_{\sH_{S}}=\sH_{S}^2=(2t+1)^2-4b-c=\frac{(3t-2)(t-1)}2.$$
To conclude observe that, with the same argument of the claims, we can prove that $\f_{\sH|R}$ is generically finite, therefore  
$$\deg\f_{\sH}>\deg\f_{\sH|H}=\deg\f_{\sH_{S}}=(2t+1)^2-4b-c=\frac{(3t-2)(t-1)}2 $$
\end{proof}

\begin{rem}\label{rem:non_defective}Lemma~\ref{lem:degeneration_ok}
  proves that $\deg\f_\sH$ is finite. Hence as a byproduct we get that condition $(\dag)$ is always satisfied in our range. That is $X$ is not $k$-defective for $a=2t$.
\end{rem}

\begin{proof}[Proof of Theorem~\ref{th:main_identifi}] We already know
  that the number of decomposition is finite only if $a=2t$. By
  Remark~\ref{rem:non_defective} we conclude that the number is finite
  when $a=2t$. Let $d$ be the number of decompositions for a general
  pair. Then by Theorem~\ref{th:birational_tangent_proj} we know that
  $d\geq \deg\f$ where $\f:X\dasharrow \p^3$ is the tangential
  projection. The required bound is obtained combining Lemma~\ref{lem:birational_degeneration} and Lemma~\ref{lem:degeneration_ok}.  
\end{proof}


\begin{thebibliography}{99999}
\bibitem[AH]{AH} J. Alexander, A. Hirschowitz, \textit{The blown-up Horace method: application
  to fourth-order interpolation}  Invent. Math.  {\bf 107}  (1992),  no. 3, 585--602
\bibitem[AH2]{AH2} \bysame \textit{Polynomial
  interpolation in several variables}  J. Algebraic Geom.  {\bf 4}
  (1995),  no. 2, 201--222
\bibitem[AGHKT]{AGHKT} A. Anandkumar, R. Ge, D. Hsu, S. M. Kakade, M. Telgarsky  \textit{Tensor Decompositions for Learning Latent Variable Models}, 
Journal of Machine Learning Research {\bf 15} (2014), 2773--2832
\bibitem[BBCC]{BBCC}
E. Ballico, A. Bernardi, M. V. Catalisano, L. Chiantini, \emph{Grassmann secants, identifiability, and linear systems of tensors}, Linear Algebra and its Applications 438 (2013) 121--135
\bibitem[Be]{Be}
D. J. Bates, J. D. Hauenstein, A. J. Sommese, C. W. Wampler, \emph{Bertini: Software for numerical algebraic geometry}, available at {\tt{bertini.nd.edu}}
\bibitem[BHSW]{BHSW}
D. J. Bates, J. D. Hauenstein, A. J. Sommese, C. W. Wampler, \emph{Numerically Solving Polynomial Systems with Bertini}, SIAM, Philadelphia (2013)
\bibitem[CMS]{CMS} J. Cahill, D. Mixon, N. Strawn,\textit{
Connectivity and irreducibility of algebraic varieties of
finite unit norm tight frames}, arXiv:1311.4748
\bibitem[CaCh]{CaCh} E. Carlini, J. Chipalkatti, \emph{On Waring’s problem for several algebraic forms}, Comment. Math. Helv., vol. {\bf 78}, no. 3, pp. 494--517, 2003
\bibitem[CaJo]{CaJo} M. Catalano-Johnson, \emph{The possible dimensions of the higher secant varieties},   Amer. Journ. of Math. {\bf 118}, no. 2 (1996), 355--361 
\bibitem[ChCi]{ChCi} L. Chiantini, C. Ciliberto \textit{ Weakly defective
  varieties} Trans. Amer. Math. Soc. {\bf 354} (2002), no. 1, 151--178
\bibitem[CR]{CR} 
C. Ciliberto, F. Russo, \emph{Varieties with minimal secant degree and linear systems of maximal dimension on surfaces}, Advances in Mathematics {\bf 200} (2006) 1--50
\bibitem[CQU]{CQU} P. Comon, Y. Qi, K. Usevich, \emph{A polynomial formulation for joint decompositions of symmetric tensors of different order}, 
12th International Conference on Latent Variable
Analysis and Signal Separation, Liberec, 2015, Proceedings
of LVA/ICA 2015, 22--30
\bibitem[DF]{DF}
C. Dionisi, C. Fontanari, \emph{Grassmann defectivity à la Terracini}, Le Matematiche {\bf 56} (2001) 245--255
\bibitem[KL]{KL}
R. Krone, A. Leykin, \emph{package Numerical Algebraic Geometry for Macaulay 2},  available at {\tt{www.math.uiuc.edu/Macaulay2}}
\bibitem[Lon]{Lon} F. London, \textit{Ueber die Polarfiguren der ebenen Kurven dritter Ordnung}, Math. Ann. {\bf 36}, (1890), 535--584

\bibitem[Har]{Har} R. Hartshorne, \emph{ Algebraic Geometry, GTM 52, Springer, 1977}
\bibitem[HOOS]{HOOS}
J. D. Hauenstein, L. Oeding, G. Ottaviani, A. J. Sommese,
\emph{Homotopy techniques for tensor decomposition and perfect
  identifiability}, arXiv:1501.00090
\bibitem[Hi]{Hi} 
D. Hilbert, \emph{Letter adresse\'e \`a M. Hermite}, Gesam. Abh. vol II, 148--153.
\bibitem[M2]{M2}D. Grayson, M. Stillman, \emph{Macaulay2, a software system for research in algebraic geometry}, available at {\tt{www.math.uiuc.edu/Macaulay2}}
\bibitem[MM]{MM} A. Massarenti, M.Mella, \emph{Birational aspects of the geometry of varieties of sums of powers} Adv. Math. 243 (2013), 187--202.
\bibitem[Me1]{Me} M. Mella,\textit{ Singularities of linear systems and the Waring problem}. Trans. Amer. Math. Soc. {\bf 358} (2006), no. 12, 5523--5538
\bibitem[Me2]{Me1} M. Mella, \textit{Base loci of linear systems and the Waring problem}. Proc. Amer. Math. Soc. {\bf 137} (2009), no. 1, 91--98.
\bibitem[Mu1]{Mu} S. Mukai, \emph{Fano 3-folds} in: Complex Projective
  Geometry, Trieste, 1989/Bergen, 1989, in: London Math. Soc. Lecture
  Note Ser., vol. 179, Cambridge Univ. Press, Cambridge, 1992,
  pp. 255--263.
\bibitem[Mu2]{Mu1} S. Mukai, \emph{Polarized K3 surfaces of genus 18 and 20}, in: Complex Projective Geometry, in: LMS Lecture Notes Series, Cambridge: University Press, 1992, pp. 264--276.
\bibitem[OO]{OO}
L.~Oeding and G.~Ottaviani.
{\em Eigenvectors of tensors and algorithms for Waring decomposition}.
 J.~Symbolic Comput. \textbf{54} (2013), 9--35.
\bibitem[ORS]{ORS} L. Oeding, E. Robeva, B. Sturmfels, \textit{Decomposing Tensors into Frames}, Advances in Applied Mathematics {\bf 73} (2016) 125--153

\bibitem[OS]{OS} G. Ottaviani, E. Sernesi, \emph{ On the hypersurface
    of L\"uroth quartics}, Michigan Math. J. {\bf 59}, 365--394 (2010)
\bibitem[Pa]{Pa}F. Palatini, \emph{Sulla rappresentazione delle forme
    ternarie mediante la somma di potenze di forme lineari},
  Rom. Acc. L. Rend. {\bf 12} (1903) 378--384.
\bibitem[RS]{RS}
K.~Ranestad and F.~Schreyer.
\emph{ Varieties of sums of powers}.
J.~Reine Angew. Math.~\textbf{525} (2000), 147--181.
\bibitem[Re]{Re} T. Reye, \emph{Ueber lineare Systeme und Gewebe von Fl\"achen zweiten grades}, Journal reine angew. Math., 82, 1877, 54--83.
\bibitem[Ri]{Ri} H.W. Richmond, \emph{On canonical forms},
  Quart. J. Pure Appl. Math. {\bf 33} (1904) 967--984.
\bibitem[Ro]{Ro} R.A. Roberts, \emph{Note on the plane cubic and a conic}, Proc. London Math. Soc. (1) 21, (1889), 62--69.

\bibitem[Sco]{Sco} G. Scorza, \emph{Sopra le figure polari delle curve piane del terzo ordine}, Math. Ann, {\bf 51} (1899), 154--157
\bibitem[Sy]{Sy} J.J. Sylvester, \emph{Collected Works}, Cambridge University Press, 1904.
\bibitem[Te]{Te} A. Terracini, \textit{Sulle $V_k$ per cui la
  variet\`a degli $S_h$ $(h+1)$-seganti ha dimensione minore
  dell'ordinario}, Rend. Circ. Mat. Palermo, {\bf 31} (1911), 392--396
\bibitem[Te2]{Te2} A. Terracini, \textit{Sulla rappresentazione delle coppie di forme ternarie mediante somme di potenze di forme lineari},
Annali Mat. Pura e Appl., {\bf 24},  1915, 1--10 
\bibitem[We]{We} K. Weierstrass, , \emph{Zur theorie der bilinearen and quadratischen Formen}, Monatsh.
Akad. Wisa. Berlin, (1867), pp. 310--338.
\end{thebibliography}
\end{document}